\documentclass[12pt]{amsart}
\usepackage{amssymb,verbatim,amscd,amsmath,graphicx,enumerate}
\usepackage{amsmath}
\usepackage{graphicx}
\usepackage{caption}
\usepackage{subcaption}
\usepackage{pdfsync}
\usepackage{fullpage}
\usepackage{color}
\usepackage{enumitem}
\usepackage{marginnote}
\usepackage{tikz}

\usetikzlibrary{patterns}
\usetikzlibrary{calc}
\usetikzlibrary{matrix}
\usepgflibrary{arrows}
\usetikzlibrary{arrows}
\usetikzlibrary{decorations.pathreplacing}
\usetikzlibrary{decorations.pathmorphing}
\usepackage{hyperref}

\setlength{\footskip}{0.5cm}
\setlength{\parskip}{0.20cm}
\setlength{\parindent}{0.5cm}

\numberwithin{equation}{section}
\hyphenation{semi-stable}

\newtheorem{theorem}{Theorem}[section]
\newtheorem{lemma}[theorem]{Lemma}
\newtheorem{proposition}[theorem]{Proposition}
\newtheorem{corollary}[theorem]{Corollary}

\theoremstyle{definition}
\newtheorem{definition}[theorem]{Definition}

\newtheorem{def-prop}[theorem]{Definition-Proposition}
\newtheorem{remark}[theorem]{Remark}
\newtheorem{example}[theorem]{Example}

\newtheorem*{Mysketch}{Sketch of proof} % \newtheorem establishes the object heading
% this is the environment name for the input
  {\pushQED{\qed}\begin{Mysketch}}
  {\popQED\end{Mysketch}}

\DeclareMathOperator{\Ass}{Ass}
\DeclareMathOperator{\Min}{Min}

\DeclareMathOperator{\depth}{depth}

\DeclareMathOperator{\Ext}{Ext}

\DeclareMathOperator{\ini}{in_{>}}
\DeclareMathOperator{\pol}{pol}
\DeclareMathOperator{\depol}{depol}

\newcommand{\lcm}[1]{\ensuremath{{\rm{lcm}}{#1}}}

\begin{document}

\title{Initially Regular Sequences on Cycles and Depth of Unicyclic Graphs}

\author{Le Tran}
\address{Department of Mathematical Sciences \\
New Mexico State University\\
P.O. Box 30001 \\
Department 3MB \\
Las Cruces, NM 88003}
\email{letran95@nmsu.edu}
\urladdr{ https://sites.google.com/view/tran-ngocle}

\keywords{Regular sequence, depth, projective dimension, associated primes, monomial ideal, initial ideal, edge ideal, cycles, unicyclic graphs}
\subjclass[2020]{13C15, 13D05, 05E40, 13F20}

\begin{abstract}
In this article, we establish initially regular sequences on cycles of the form $C_{3n+2}$ for $n\ge 1$, in the sense of \cite{FHM-ini}. These sequences accurately compute the depth of these cycles, completing the case of finding effective initially regular sequences on cycles. Our approach involves a careful analysis of associated primes of initial ideals of the form $\ini(I,f)$ for arbitrary monomial ideals $I$ and $f$ linear sums. We describe the minimal associated primes of these ideals in terms of the minimal primes of $I$. Moreover, we obtain a description of the embedded associated primes of arbitrary monomial ideals. Finally, we accurately compute the depth of certain types of unicyclic graphs.
\end{abstract}

\maketitle

\section{Introduction}\label{intro}

Let $R$ be a Noetherian local ring, let $\mathfrak{m}$ be the unique maximal ideal of $R$, and let $k = R/\mathfrak{m}$ denote the residue field of $R$. For a finitely generated module $M$, the \textit{depth} of $M$ is an invariant that has been used extensively in the study of rings and modules. In general, $\depth M$ is bounded above by the dimension of $M$ and when equality is achieved, then the module is Cohen-Macaulay. This an important class of modules that has many desirable properties. The depth of a module was first introduced in a homological setting, namely 
$$\depth(M) = \min\left\{i \mid \Ext^i_R(k,M) \ne 0\right\}.$$

In the Noetherian setting, we can also compute the depth of a module using the notion of a maximal $M$-\textit{regular sequence} contained in $\mathfrak{m}$. A sequence $f_1, \ldots, f_d\in \mathfrak{m}$ is said to be an \textit{$M$-regular sequence} if $f_1$ is a \textit{nonzero divisor} (\textit{regular}) on $M$ and $f_i$ is regular on $M/(f_1,\ldots, f_{i-1})M$ for $2\le i\le d$. An $M$-regular sequence $f_1,\ldots, f_d\in \mathfrak{m}$ is maximal if for any $f_{d+1}\in\mathfrak{m}$, the sequence $f_1,\ldots,f_{d+1}$ is not an $M$-regular sequence. It can be seen that the depth of $M$ is the length of any maximal $M$-regular sequence in $\mathfrak{m}$. In fact, all maximal $M$-regular sequences have the same length, so one can take advantage of this to compute the depth of $M$. However, identifying an $M$-regular sequence is in general difficult.

In this work, we focus our attention on the special class of monomial ideals, and in particular squarefree monomial ideals. We can take advantage of the underlying combinatorial structures to determine certain types of sequences, namely initially regular sequences as in \cite{FHM-ini}, that in turn give a lower bound on the depth of modules of the form $R/I$, where $I$ is a monomial ideal in a polynomial ring $R$. 

Let $R$ be a polynomial ring over a field and $I$ a monomial ideal in $R$. In general, it is difficult to obtain the value of $\depth(R/I)$, or more generally $\depth(R/I^t)$, for $t\ge 1$. The function $F(t)=\depth (R/I^t)$ for $t\ge 1$ is called the \textit{depth function} of $I$. A classic result of Burch \cite{Burch} that was improved by  Brodmann \cite{Brodmann} establishes the limiting behavior of the depth function, namely  $\lim_{t\to\infty}\depth(R/I^t) \le \dim R - \ell(I)$, where $\ell(I)$ is the analytic spread of $I$. It was shown by Eisenbud and Huneke \cite{Eis-Hun} that equality holds if the associated graded ring $gr_R(I) = \displaystyle\bigoplus_{i=1}^\infty I^i/I^{i+1}$ of $I$ is Cohen-Macaulay.  However, the initial behavior of $\depth(R/I^t)$ is still mysterious and is sometimes wild, see \cite{Bandari et. al.}.

It is natural to focus on finding lower bounds for $\depth(R/I^t)$. Herzog and Hibi showed that $\depth(R/I^t)$ is a decreasing function if all powers of $I$ have a linear resolution, \cite{powerofidealHerzog}. In general, edge ideals and their powers do not have linear resolutions. Recently, H\`{a}, Nguyen, Trung, and Trung proved that the depth function of a monomial ideal can be any numerical function that is asymptotically constant, \cite{HNTT}. It is known that $\depth(R/I^t)$ is not necessarily a nonincreasing function in the case $I$ is a squarefree monomial ideal, see \cite[Theorem~13]{Kaiser et. al.}. In general, for a monomial ideal $I$, lower bounds of $\depth(R/I)$ have been studied by various authors, see  \cite{DHS, DaoSchweig1, DaoSchweig2, FHM-ini, FHM-hyperforest, FHM-square, FM-lower, Lin-Mantero, squarefreePopescu}. However, the exact value for $\depth(R/I)$ is not known for arbitrary monomial ideals. 
For an arbitrary homogenous ideal $I$,  one way to determine a lower bound of $\depth(R/I)$ is to pass to an initial ideal using the fact that $\depth(R/I) \ge \depth(R/\ini(I))$, see \cite[Theorem~3.3.4]{HerzogHibi}. Moreover, equality holds when $\ini(I)$ is squarefree, \cite[Corollary~2.7]{Conca}. Exploiting this fact, Fouli, H\`{a}, and Morey introduced the notion of \textit{initially regular sequences} on $R/I$, \cite{FHM-ini}.  We recall the definition below.

\begin{definition}\cite[Definition~1.1]{FHM-ini}
    Let $R$ be a polynomial ring over a field and $I$ a proper ideal of $R$. Let $>$ be a fixed term order and set $I_1 = \ini(I)$. We say that a sequence of nonconstant polynomials $f_1,\ldots, f_q$ is an \textit{initially regular sequence} on $R/I$ if for each $i$, $1\le i\le q$, $f_i$ is a regular element of $R/I_i$, where $I_i = \ini(I_{i-1},f_{i-1})$.
\end{definition}
 These initially regular sequences play a similar role as regular sequences and give an effective lower bound for the depth of $R/I$ when $I$ is a homogeneous ideal, see \cite[Proposition~2.3]{FHM-ini}. In many instances, their length accurately computes the depth of $R/I$. Even though apriori the definition of initially regular sequences appears rather cumbersome, Fouli, H\`{a}, and Morey show that if $I$ is a monomial ideal one can use the underlying combinatorics to determine such initially regular sequences that compute the depth of $R/I$ and in some cases the depth of $R/I^t$ for $t\ge 1$, see \cite{FHM-ini, FHM-hyperforest, FHM-square}.
 
  Recall that for a graph $G$ on $n$ vertices, say $x_1, \ldots, x_n$, we let $R=k[x_1, \ldots, x_n]$ denote the polynomial ring over a field $k$, where $x_1, \ldots, x_n$ are now variables by abuse of notation. The edge ideal of the graph $G$, denoted $I(G)$, is the monomial ideal in $R$ generated by the monomials of the form $x_ix_j$, where $\left\{x_i,x_j\right\}$ is an edge of graph $G$. We refer the reader to \cite{graphtheory} for more information on graph theory.  One case where the initially regular sequences established in \cite[Theorem~3.11]{FHM-ini} do not capture the depth of $R/I$ is the case of the edge ideal of a cycle of length $3n+2$, where $n \in\mathbb{N}$, see Remark~\ref{sequences}.

One of our goals is to determine an initially regular sequence of length $n+1$ on $R/I$, where $I$ is the edge ideal of a cycle of length $3n+2$; this is accomplished in Theorem~\ref{C_{3n+2}}. The idea of our approach comes from the fact that for a nonzero module $M$ of a Noetherian ring $R$, the set of zero divisors of $M$ is the union of all associated primes of $M$. We know that an associated prime of $R/I$ corresponds to a vertex cover for the graph of the ideal $I$, when $I$ is the edge ideal of a graph or hypergraph. 

Recall that for an ideal $I$ in a Noetherian ring $R$, we define the set of associated primes of the ideal $I$ as follows
$$\Ass(R/I) = \left\{P\subseteq R \mid P \text{ is prime and }P= (I : c) \text{ for some }c\in R\right\}.$$
The set $\Ass(R/I^t), t\ge 1$ has been studied extensively in the case $I$ is the edge ideal of a graph, see \cite{Francisco et. al.}, \cite{asspowerofsquarefree}, \cite{hien2019saturation}, \cite{Martinez}. In \cite{eardecom} Lam and Trung introduced the notion of \textit{ear decompositions} to describe explicitly $\Ass(R/I^t)$ for any edge ideal $I$ and any $t\ge 1$. However, there is still very little known about $\Ass(R/I^t)$, when $I$ is an arbitrary monomial ideal, even for $t=1$. 

The paper is organized as follows. In Section~\ref{associated primes} for a monomial ideal $I$, we describe the minimal associated primes of $R/\ini(I,f)$, where $f$ is a linear sum, in terms of the minimal associated primes of $R/I$, see  Lemma~\ref{min primes leaf pair} and Proposition~\ref{mainprop1}. Moreover, we show that the embedded associated primes of $R/I$ can be described by taking the union of minimal associated primes of $R/I$ and some appropriate variables coming from the set of \textit{star neighbors} in the ideal $I$, see Definition~\ref{star neighbors} and Theorem~\ref{mainthm}. This description allows us to show that certain classes of monomial ideals have no embedded associated primes, Corollaries~\ref{no embedded},~\ref{bracket powers}. Using this description we can determine another type of regular element on monomial ideals, Corollary~\ref{new regular}. In Section~\ref{depth section} we establish an initially regular sequence that realizes the depth of $R/I(C_{3n+2})$ for any $n\ge 1$, Theorem~\ref{C_{3n+2}}. Furthermore, we accurately compute the depth of certain unicyclic graphs, Theorem~\ref{G_nm}.

\section{Associated primes of monomial ideals}\label{associated primes}
In this section, we describe the associated primes of monomial ideals in a polynomial ring. Before we proceed we fix some notation. Let $R$ be a polynomial ring and $I$ a monomial ideal. Let $\mathcal{G}(I)$ denote the set of monomial generators of $I$.  A variable $a$ is a {\it{leaf}} for $I$ if there exists a unique monomial $M \in \mathcal{G}(I)$ such that $a\mid M$.  For any variable $x$ and any monomial $M$ in $R$, we let $d_x(M)$ denote the degree of $x$ in $M$.  Moreover, $d_x(I)=\max\{d_{x}(M)\mid  M\in \mathcal{G}(I)\}$. 

First, we will examine the minimal associated primes of the initial ideal $\ini(I, f)$, where $f$ is a binomial, for an arbitrary monomial ideal $I$.  

\begin{lemma}\label{leaf}
Let $R$ be a polynomial ring and $I$ a monomial ideal in $R$. Let $a, b$ be variables in $R$ and suppose that $a$ is a leaf in $I$ and $ab\mid M$ for some $M\in \mathcal{G}(I)$. Let $I_1 = {\ini(I,a+b)}$, where $>$ is an order such that $a>b$. If $Q\in\Min(R/I_1)$, then $Q = (P,a)$ for some $P\in \Min(R/I)$.
\end{lemma}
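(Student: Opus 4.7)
The plan is to first compute $I_1 = \ini(I, a+b)$ explicitly as a monomial ideal, and then to match each minimal prime of $I_1$ with a minimal prime of $I$ via a combinatorial cover argument.

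Set $d = d_a(M) \geq 1$ and $e = d_b(M) \geq 1$, so $M = a^d b^e M'''$ with $M'''$ coprime to $a$ and $b$. Let $L$ be the monomial ideal generated by $\mathcal{G}(I) \setminus \{M\}$; by the leaf hypothesis no generator of $L$ is divisible by $a$. The first step is to prove
\[
I_1 \;=\; \bigl(a,\, L,\, b^{d+e} M'''\bigr).
\]
The inclusion $\supseteq$ uses that $a = \ini(a+b)$, that $L \subseteq I$, and that $b^{d+e} M'''$ arises (up to sign) by reducing the S-polynomial $(M/a)(a+b) - M = a^{d-1} b^{e+1} M'''$ by $a+b$ a further $d-1$ times, giving $(-1)^{d-1} b^{d+e} M'''$. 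For $\subseteq$, I would verify Buchberger's criterion on $\{a+b\} \cup \mathcal{G}(L) \cup \{M, b^{d+e}M'''\}$: monomial-monomial S-pairs vanish; the S-pair of $a+b$ with each $N \in \mathcal{G}(L)$ reduces via $bN \in I$ (since $a \nmid N$); the S-pair of $a+b$ with $M$ is the reduction just described; the S-pair of $a+b$ with $b^{d+e}M'''$ collapses to $b\cdot b^{d+e}M'''$; and $M$ is redundant modulo $a$.

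With $I_1$ in hand, any $Q \in \Min(R/I_1)$ must contain $a$, so one may write $Q = (a) + \mathfrak{q}$, where $\mathfrak{q} = (V')$ is generated by a set of variables $V'$ with $a \notin V'$. Containment $I_1 \subseteq Q$ then translates into the statement that $V'$ covers every generator of $L$ and contains some variable of $M$ distinct from $a$ (equivalently, meets the variable set of $b^{d+e}M'''$), and minimality of $Q$ forces $V'$ to be minimal with this combined covering property. Let $P = (V')$; then $L \subseteq P$ and $P$ meets the variable set of $M$, so $I \subseteq P$. To see $P \in \Min(R/I)$, argue by contradiction: if $P' = (W) \subsetneq P$ contains $I$, then $W \subsetneq V'$ and $a \notin W$, so $W$ must cover $M$ via a variable other than $a$; consequently $W$ covers $L$ and meets the variable set of $b^{d+e}M'''$, contradicting the minimality of $V'$. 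Therefore $Q = (P, a)$ with $P$ a minimal prime of $I$.

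The main obstacle is the initial ideal computation itself: when $d_a(M) > 1$, the S-polynomial of $a+b$ with $M$ must be reduced $d$ times to land on the new monomial generator $\pm b^{d+e} M'''$, and one has to check that this reduction does not create additional generators via other S-pairs. The leaf hypothesis is precisely what rules out interference from other generators of $I$ during this reduction, making the formula for $I_1$ clean; the translation to minimal primes afterward is then routine combinatorics of monomial vertex covers.
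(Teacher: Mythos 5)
Your proposal is correct and follows essentially the same route as the paper: you arrive at the same description $I_1=(a,\,L,\,b^{d+e}M''')$ of the initial ideal (the paper simply cites \cite[Lemma~2.4]{FHM-ini} for this rather than re-running Buchberger's criterion), and your subsequent vertex-cover argument identifying $Q=(P,a)$ with $P\in\Min(R/I)$ is the same contradiction-with-minimality argument given in the paper. No gaps.
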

\begin{proof} Suppose that $\mathcal{G}(I) = \left\{M_1,\ldots, M_p\right\}$. Without loss of generality, we may assume that $M_1 = a^{r_1}b^{r_2}x$, for some monomial $x$ with $r_1 = d_a(M_1)$ and $r_2 = d_b(M_2)$.

 Using \cite[Lemma~2.4]{FHM-ini} we see that $I_1=\langle a, \widehat{M} \mid M\in \mathcal{G}(I)\rangle$, where $\widehat{M} = b^{d_{a}(M)}\dfrac{M}{a^{d_{a}(M)}}$. Since $a$ is a leaf in $I$, then $M_1$ is the only monomial that $a$ divides and therefore,  $\widehat{M_1} = b^{r_1+r_2}x$ and $\widehat{M_i}=M_{i}$ for all $i>1$.  Therefore, $$I_1=\langle a, b^{r_1+r_2}x, M_2, \ldots, M_p\rangle.$$

Let $Q$ be a minimal associated prime of $R/I_1$. Then we may write $Q = (a,\,x_1,\,\ldots,\,x_r)$ for some distinct variables $x_1,\ldots,x_r$. Consider $P = (x_1,\,\ldots,\,x_r)$. We claim that $P\in\Min(R/I)$. Indeed, since $Q \in \Min(R/I_1)$, then for each $2\le k \le p$, there exists a variable $x_i$ such that $x_i \mid M_k$ with $1\le i\le r$ and  there exists a variable $x_j$ such that $x_j\mid \widehat{M_1}$ with $1\le j\le r$. Thus, $x_j \mid M_1$ as well and hence $I\subseteq P$. Suppose that $P$ is not minimal. Then there exists $P'\in\Min(R/I)$ such that $P'\subsetneq P$. Without loss of generality, we may assume that $x_1 \notin P'$. Since $P'\in\Min(R/I)$, it follows that there exists $x_{\ell}\in P'$ such that $x_{\ell} \mid M_1$ for some $2\le \ell \le r$. Since $x_{\ell}\ne a$, then $x_{\ell}\mid b^{r_2}x$ and hence $x_{\ell}\mid b^{r_1+r_2}x$. Moreover, since $I\subseteq P'$, then $M_2,\ldots, M_n \in P'$. Therefore, $I_1\subseteq (P', a)$. It is clear that $(P',a) \subsetneq Q$, which is contrary to the minimality of $Q$. Therefore, $P$ is minimal, completing the proof.
\end{proof}

Next, we recall the definition of a leaf pair from \cite{FHM-ini}. 

\begin{definition}\cite[Definition~4.10]{FHM-ini}
    Let $I$ be a monomial ideal in a polynomial ring R and let $x, y$ be two leaves in $I$. If $M_1 \ne M_2$ are the unique monomial generators in $I$ such that $x\mid M_1$ and $y\mid M_2$, and there exist monomials $z, w\in R$ with $\gcd(z,w)=1$ such that $x\nmid z$, $z\mid M_1$, $y\nmid w, w\mid M_2$, and $zw\in I$, then $x$ and $y$ is called a leaf pair. 
\end{definition}

If $I$ is the edge ideal of a graph, then a leaf pair is a pair of leaves that are distance $3$ apart. The following lemma establishes a similar result as in Lemma~\ref{leaf} when $a,b$ is a leaf pair.
\begin{lemma}\label{leafpair}
Let $R$  be a polynomial ring and $I$ a monomial ideal in $R$. Suppose $a,\,b$ is a leaf pair and let $I_1 = \ini(I, a+b)$ with respect to a term order $>$. If $Q\in\Min(R/I_1)$, then either $Q = (P,a)$ or $Q = (P,b)$ for some $P\in\Min(R/I)$.
\end{lemma}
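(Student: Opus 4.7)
The plan is to mimic the proof of Lemma~\ref{leaf}, splitting on whether $b$ appears among the variable generators of the minimal prime $Q$, and invoking the leaf pair condition $zw\in I$ to handle the new case. Assume without loss of generality that $a>b$ under the term order. Let $\mathcal{G}(I)=\{M_1,\ldots,M_p\}$ with $M_1$ (resp.\ $M_2$) the unique generator divisible by $a$ (resp.\ $b$); set $r_1=d_a(M_1)$ and write $M_1=a^{r_1}z'$, so $a\nmid z'$, and note that $b\nmid z'$ as well since $b\nmid M_1$ (because $b$ is a leaf and $M_2\ne M_1$). Applying \cite[Lemma~2.4]{FHM-ini} gives
\[
I_1 \;=\; \bigl(a,\ b^{r_1}z',\ M_2,\ldots,M_p\bigr).
\]
Since $a\in I_1$, every $Q\in\Min(R/I_1)$ contains $a$, so I would write $Q=(a,x_1,\ldots,x_r)$ for distinct variables $x_i\ne a$.

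If $b\notin\{x_1,\ldots,x_r\}$, the argument of Lemma~\ref{leaf} goes through with $P=(x_1,\ldots,x_r)$: each $M_k$ with $k\ne 1$ lies in $Q$ and is not divisible by $a$, so some $x_i$ divides it; the generator $b^{r_1}z'\in Q$ together with $a,b\notin\{x_i\}$ forces $x_i\mid z'$ for some $i$, hence $M_1\in P$; and minimality of $P$ over $I$ follows because any $P'\subsetneq P$ with $P'\in\Min(R/I)$ would yield $(P',a)\subsetneq Q$ still containing $I_1$, contradicting minimality of $Q$. This gives $Q=(P,a)$.

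The substantive case is $b\in\{x_i\}$, say $x_1=b$, and here I would invoke the leaf pair condition. Since $zw\in I$ there is some $M_k$ with $M_k\mid zw$; because $a\nmid z$ (by hypothesis) and $a\nmid w$ (since $w\mid M_2$ and $a\nmid M_2$), and symmetrically $b\nmid z$ and $b\nmid w$, one gets $a,b\nmid zw$, ruling out $k=1,2$. Then $M_k\in I_1\subseteq Q$ with $a,b\nmid M_k$ forces some $x_i$ with $i\ge 2$ to divide $M_k\mid zw$, and coprimality $\gcd(z,w)=1$ yields $x_i\mid z\mid M_1$ or $x_i\mid w\mid M_2$. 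In the first alternative I would set $P=(b,x_2,\ldots,x_r)$, so $Q=(P,a)$; in the second, $P=(a,x_2,\ldots,x_r)$, so $Q=(P,b)$. Checking $I\subseteq P$ is immediate: the generators $M_k$ with $k\ne 1,2$ are caught by some $x_i$ (because $a\nmid M_k$ for $k\ne 1$ and $b\nmid M_k$ for $k\ne 2$ force the witness in $Q$ to be an $x_i$), while $M_1$ and $M_2$ are caught by the chosen variable.

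The main obstacle is the minimality of $P$ in the second case, which needs a finer contradiction argument than Lemma~\ref{leaf}. For $P=(b,x_2,\ldots,x_r)$ the pattern is the same: any $P'\subsetneq P$ with $P'\in\Min(R/I)$ produces $(P',a)\subsetneq Q$ still containing $I_1$. For $P=(a,x_2,\ldots,x_r)$ one must split on whether $a\in P'$: if yes, $(P',b)$ sits strictly between $I_1$ and $Q$; if no, then $P'\subseteq(x_2,\ldots,x_r)$ and the containment $I\subseteq P'$ forces some $x_j$ with $j\ge 2$ to divide $M_1$, after which $(P',a)$ again contains $I_1$ and is strictly contained in $Q$. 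Threading through this extra subcase is the only real bookkeeping hurdle beyond the argument of Lemma~\ref{leaf}.
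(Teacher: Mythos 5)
Your proposal is correct and takes essentially the same route as the paper's proof: both compute $I_1$ via \cite[Lemma~2.4]{FHM-ini}, split on whether $b\in Q$, use the leaf-pair generator dividing $zw$ to locate a witness variable in $z$ or $w$, and then verify minimality of the resulting $P$ by producing a prime strictly between $I_1$ and $Q$. Your treatment of the minimality step is marginally more streamlined (you deduce $I_1\subseteq(P',a)$ or $(P',b)$ directly from $I\subseteq P'$ rather than first pinning down whether $a$ or $b$ lies in $P'$), but the argument is the same in substance.
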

\begin{proof}
Let $\mathcal{G}(I) = \left\{M_1,\ldots,M_p\right\}$. Without loss of generality we may assume that $a>b$, and $M_1$, $M_2$ are the unique monomials that are divisible by $a, b$, respectively. Let $r_1 = d_a(M_1)$ and $r_2 = d_b(M_2)$. Then we can write $M_1 = a^{r_1}zx$ and $M_2 = b^{r_2}wy$, with $z, w,x,y$ monomials, $a\nmid z$, $b\nmid w$, and $zw\in \mathcal{G}(I)$. Without loss of generality, assume $M_3 = zw$. By \cite[Lemma~2.4]{FHM-ini}, we have  $I_1 = (a, b^{r_1}zx, b^{r_2}wy, zw, M_4,\ldots, M_p)$. Let $Q\in \Min(R/I_1)$ and notice that $a\in Q$. We have two cases to consider. 

First, suppose $b\in Q$. Then we can write $Q=(a, b, x_1, \ldots, x_r)$ with $x_1,\ldots, x_r$ distinct variables. Notice that since  $b\nmid M_i$ for all $i\ge 3$, then $(zw, M_4,\ldots, M_p) \subseteq (x_1, \ldots, x_r)$.  Moreover, there exists $x_i$ with $1\le i \le r$ such that $x_i \mid zw$. Then either $x_i \mid z$ or $x_i \mid w$. 

Suppose first that $x_i \mid z$. Let $P=(b, x_1, \ldots, x_r)$.  As $x_i \mid z$, then $x_j \nmid wy$ for all $j$, since otherwise $b$ would be redundant in $Q$, contradicting the minimality of $Q$. Notice that  $x_i \mid a^{r_1}zx$ and thus $I \subseteq P$. We claim that $P\in \Min(R/I)$. Suppose instead that there exists a prime $P'\subsetneq P$ such that $I\subseteq P'$.  If $b\not\in P'$, then there exists $x_j \in P'$ such that $x_j \mid wy$, a contradiction since $x_j \in P$. Therefore, without loss of generality, we may assume that $x_1\notin P'$.  Hence $(zw, M_4,\ldots, M_p) \subseteq (x_2, \ldots, x_r)$ and since $b\in P'$ , $I\subseteq P'$ and $x_1\neq a$,  then $I_1 \subseteq (P',a) \subsetneq (P,a)=Q$, contradicting the minimality of $Q$. Hence $P\in \Min(R/I)$ and  $Q=(P,a)$. 

Next suppose $x_i \mid w$. Then $x_j \nmid zx$ for all $j$, since $b$ would be redundant in $Q$. Let $P=(a, x_1, \ldots, x_r)$. We claim that $P\in \Min(R/I)$. Suppose instead that there exists a prime $P'$ with $I\subseteq P' \subsetneq P$. Notice that $a \in P'$, since $x_j \nmid zx$ for all $j$. Therefore, without loss of generality, we may assume that $x_1\not\in P'$. Hence $(zw, M_4,\ldots, M_p) \subseteq (x_2, \ldots, x_r)$ and thus $I_1 \subseteq (P', b)\subsetneq (P,b)=Q$, contradicting the minimality of $Q$. Therefore, $P\in \Min(R/I)$ and $Q=(P, b)$ in this case.

Finally, it remains to consider the case $b\not \in Q$. We may write $Q=(a, y_1, \ldots, y_t)$ for some variables $y_1, \ldots, y_t$. Let $P=(y_1, \ldots, y_t)$ and notice that since $b\not\in Q$, then there exists $y_j$ with $1\le j \le r$ such that $y_j \mid zx$. Hence $y_j \mid a^{r_1}zx$ and thus $I\subseteq P$. As before, we claim that $P \in \Min(R/I)$. Indeed, if there exists a prime $P' \subsetneq P$ with $I\subseteq P$, then without loss of generality we may assume that $y_1 \not\in P'$. Then since $a\not\in P'$, then $zx\in P'$ and thus $I_1 \subseteq (P',a) \subsetneq (P,a)=Q$, a contradiction to the minimality of $Q$. Thus $P\in \Min(R/I)$ and $Q=(P,a)$ in this case. 
\end{proof}

The converse of Lemma~ \ref{leafpair} is also true in the case of edge ideals of graphs. Recall that if $I$ is the edge ideal of a graph and $x$ is a vertex of the graph, then $N(x)$ is the neighbors of $x$, that is $N(x)=\{y\in R\mid y \mbox{ is a variable, } xy\in \mathcal{G}(I)\}$. 
 
\begin{lemma}\label{min primes leaf pair}
Let $R$ be a polynomial ring, $I$ the edge ideal of a graph, and let $P\in\Min(R/I)$. Suppose that $a,\,b$ is a leaf pair in $I$ and let $I_1=\ini(I,a+b)$ with respect to an order such that $a > b$.  If $a\not\in P$, then $(P,a)\in \Min(R/I_1)$  and if $a\in P$, then $(P,b)\in \Min(R/I_1)$.
\end{lemma}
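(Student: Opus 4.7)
My plan is to verify the two claimed containments and then establish minimality via the classification of $\Min(R/I_1)$ provided by Lemma~\ref{leafpair}. First I use the shape of $I_1$ derived in the proof of that lemma: since $a, b$ is a leaf pair in an edge ideal, we may take $M_1 = az$, $M_2 = bw$, and $M_3 = zw$, and then $I_1 = (a, bz, bw, zw, M_4, \ldots, M_p)$. In Case 1 ($a \notin P$), the edge $\{a, z\} \in I \subseteq P$ forces $z \in P$, so every generator of $I_1$ lies in $(P, a)$. In Case 2 ($a \in P$), minimality of $P$ forces $a$ to uniquely cover some edge of $G$; since $a$ is a leaf of $I$, that edge must be $\{a, z\}$, so $z \notin P$. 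Then $w \in P$ (to cover $\{z, w\} \in I$) and $b \notin P$ (else $b$ is redundant in $P$, since its only edge $\{b, w\}$ is already covered by $w$). With this, $I_1 \subseteq (P, b)$ is immediate.

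For minimality, I would assume toward a contradiction that some $Q \in \Min(R/I_1)$ is strictly contained in $(P, a)$ (Case 1) or $(P, b)$ (Case 2). By Lemma~\ref{leafpair} and its proof, $Q$ has one of two shapes: $Q = (P_0, a)$ with $P_0 \in \Min(R/I)$ and $a \notin P_0$, or $Q = (P_0, b)$ with $P_0 \in \Min(R/I)$, $b \notin P_0$, and $a \in P_0$. The two \emph{parallel} sub-cases --- $Q = (P_0, a) \subsetneq (P, a)$ in Case 1, or $Q = (P_0, b) \subsetneq (P, b)$ in Case 2 --- immediately give $P_0 \subsetneq P$ upon comparing the underlying variable sets, contradicting the minimality of $P$ over $I$.

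The main obstacle is the \emph{cross} sub-cases. My plan is to exploit the leaf property by swapping variables: in Case 1 with $Q = (P_0, b)$, set $P' = (P_0 \setminus \{a\}) \cup \{z\}$; in Case 2 with $Q = (P_0, a)$ and $b \in P_0$ (where minimality of $P_0$ forces $w \notin P_0$, since $b$ is a leaf), set $P' = (P_0 \setminus \{b\}) \cup \{w\}$. Because $a$ (respectively $b$) is a leaf of $I$ whose unique edge is $\{a, z\}$ (respectively $\{b, w\}$), the monomial prime $P'$ still contains $I$. One verifies $P' \subseteq P$ using $z \in P$ (Case 1) or $w \in P$ (Case 2), and the strict inequality on heights follows from $Q \subsetneq (P, a)$ or $Q \subsetneq (P, b)$ together with the disjointness facts $a \notin P$, $a \notin P_0$, and $b \notin P_0$ as appropriate. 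This produces a monomial prime strictly smaller than $P$ that contains $I$, contradicting $P \in \Min(R/I)$. The one remaining corner --- Case 2 with $Q = (P_0, a)$ and $b \notin P_0$ --- is handled directly: one obtains $P_0 \subseteq P \setminus \{a\}$, but then $\{a, z\} \in I$ is uncovered, since $a \notin P \setminus \{a\}$ and $z \notin P$, a contradiction.
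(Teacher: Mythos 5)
Your proof is correct, but the route you take for the minimality half is genuinely different from the paper's. The containment step ($I_1\subseteq (P,a)$ when $a\notin P$, and $z\notin P$, $w\in P$, $b\notin P$, hence $I_1\subseteq (P,b)$, when $a\in P$) matches the paper. For minimality, however, the paper argues directly: it supposes $I_1\subseteq Q'\subsetneq (P,a)$ (resp.\ $(P,b)$), picks a generator $x_1$ of $P$ not in $Q'$, checks $x_1\notin\{a,b,x,y\}$, and then derives $N(x_1)\subseteq P$ together with $x_1\in P$, contradicting the minimality of $P$ as a vertex cover. You instead invoke the classification from Lemma~\ref{leafpair} (and the refined shapes visible in its proof: $Q=(P_0,a)$ with $a\notin P_0$, or $Q=(P_0,b)$ with $a\in P_0$, $b\notin P_0$), dispose of the parallel sub-cases by directly comparing $P_0$ with $P$, and handle the cross sub-cases by the leaf-swap $P'=(P_0\setminus\{a\})\cup\{z\}$ (resp.\ $(P_0\setminus\{b\})\cup\{w\}$), which is again a vertex cover because $a$ (resp.\ $b$) is a leaf; the cardinality count coming from the strict inclusion $Q\subsetneq(P,a)$ then yields $P'\subsetneq P$, contradicting minimality of $P$. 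I checked the details, including the remaining corner ($Q=(P_0,a)$ with $b\notin P_0$ forcing $z\in P_0\subseteq P$ against $z\notin P$), and they hold. The paper's argument is self-contained and uniform, needing only the explicit generators of $I_1$; yours makes the two lemmas read as converse halves of a single correspondence between $\Min(R/I)$ and $\Min(R/I_1)$, at the price of a finer case split and the swap construction. Both are valid.
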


\begin{proof}
Since $a,\,b$ is a leaf pair then there exist $M_1,\,M_2, M_3\in \mathcal{G}(I)$ such that $M_1 = ax$, $M_2 = by$ and $M_3 = xy$, where $x,\,y$ are distinct variables of $R$. Let $\mathcal{G}(I)=\{M_1, \ldots, M_p\}$ and notice that by construction $a, b \nmid M_i$ for all $i\ge 3$. By \cite[Lemma~2.4]{FHM-ini}, we have $$I_1 = \ini(I,\,a+b) = \left(a,\,bx,\,M_i\mid 2\le i\le p\right).$$
Let $P\in\Min(R/I)$. Then $P = (x_{1},\ldots, x_{s})$ for some distinct variables $x_{1},\ldots, x_{s}$. We remark that since $M_3 = xy\in\mathcal{G}(I)$, then $a$ and $b$ can not both be in $P$. Moreover, note that every associated prime of $R/I_1$ contains the variable $a$ by the construction of $I_1$. We have two cases to consider. 

First, suppose that $a\notin P$. We claim that $Q=(P,a)\in \Min(R/I_1)$. Since $ax\in I$ and $a\not\in P$, then $x\in P$. Hence $I_1 \subseteq (P,a)=Q$. Suppose $Q\not\in \Min(R/I_1)$. Then there exists a prime $Q'$ such that $I_1\subseteq Q'\subsetneq Q$. Therefore, $a\in Q'$ and thus without loss of generality we may assume that $x_1\not\in Q'$. If $b\notin P$, then $y\in P$. Moreover, we have that $b\notin Q'$ and thus $x, y\in Q'$. Hence $x_{1}\notin \left\{a, b, x, y\right\}$. Now, since $x_{1}\notin Q'$, then $N(x_{1})\subseteq Q'$, and thus $N(x_{1})\subseteq P$. Now, since $x_{1}\notin \left\{a, b, x, y\right\}$, then for all $v\in N(x_{1})$, there exists $M_i\in I_1$ with $i\ge 4$ such that $M_i = vx_{1}$. Notice that $M_i\in I$ as well since $i\ge 4$. Hence, we have that $x_{1}\in P$ and $N(x_{1})\subseteq P$, which is a contradiction to the minimality of $P$. Therefore, $Q = (P,a)\in \Min(R/I_1)$. If $b\in P$, then we see that $I_1\subseteq (P,a)$ and $y\notin P$. Since $y\notin P$, then $y\notin Q'$, and hence $x, b\in Q'$ and $x_{1}\ne x, b$. Again, $x_{1}\ne y$ since $y\notin P$ and $x_{1}\in P$. Therefore, $x_{1}\notin\left\{a, b, x, y\right\}$. By a similar argument as above, we arrive at a contradiction. Hence, $Q = (P, a) \in \Min(R/I_1)$.

Finally, suppose $a\in P$. Then we claim that $Q=(P,b) \in \Min(R/I_1)$.  Since $a\in P$, then $b\not\in P$ and thus $x\notin P$ and  $y\in P$. Clearly, we have $I_1 \subseteq (P, b)$. Suppose there exists a prime  $Q$ such that $I_1\subseteq Q\subsetneq (P, b)$. Then without loss of generality, we may assume that  $x_{1}\notin Q$. Since $x\notin P$, then $x\notin Q$, and hence $b, y\in Q$. So $x_{1}\ne b, y$. Moreover,  $x_{1}\ne x$ since $x_{1}\in P$ and $x\notin P$. Thus, $x_{1}\notin\left\{a, b, x, y\right\}$. Using a similar argument as above, we have a contradiction again. Therefore, $Q = (P, b)\in \Min(R/I_1)$. 
\end{proof}

Our next goal is to investigate the minimal associated primes of $\ini(I, f)$, where $f$ is a trinomial. We recall a lemma from \cite{FHM-square} which gives a description of the initial ideal $\ini(I,f)$, when $f$ is a trinomial. 

\begin{lemma}\cite[Lemma~4.1]{FHM-square}\label{lemmatrinomial}
    Let $I$ be a monomial ideal. Let $a, b, c$ be variables satisfying the following condition:
\begin{enumerate}
\item[$($a$)$]  $d_{a}(I), d_b(I), d_c(I) \le 1$ and 
\item[$($b$)$] If $M\in\mathcal{G}(I)$  and  $a \mid M$, then either  $b\mid M$ or  $c \mid M$.
\end{enumerate}
 Furthermore, assume that $bc\nmid M$ for any $M\in\mathcal{G}(I)$. If $>$ is a term order such that $a > b > c$, then
$$\ini(I, a+b+c) = \langle a, \widehat{M}, \lcm(X,M')c^2 \mid M, bX, acM'\in \mathcal{G}(I)\rangle.$$
\end{lemma}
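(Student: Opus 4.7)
My plan is to establish both containments by analyzing how $(I, a+b+c)$ reduces under the term order with $a > b > c$, and to verify that the three families on the right are precisely the leading terms of a Gr\"obner basis of $(I, a+b+c)$.

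For the $\supseteq$ direction I would exhibit, for each claimed generator, an element of $(I, a+b+c)$ whose leading monomial equals that generator. The element $a$ is the leading monomial of $a+b+c$. For $M \in \mathcal{G}(I)$ with $a \nmid M$, we have $\widehat{M} = M \in I$ already. For $M \in \mathcal{G}(I)$ with $a \mid M$, the hypothesis $d_a(M) \le 1$ lets me write $M = aM_0$ and then
\[M - M_0(a+b+c) = -bM_0 - cM_0 \in (I, a+b+c),\]
whose leading monomial is $bM_0 = \widehat{M}$. Finally, for a pair $bX, acM' \in \mathcal{G}(I)$, I form the S-polynomial of the monomial $bX \in I$ and the polynomial $-bcM' - c^2M'$ produced in the previous step from $acM'$. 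Using $d_c(bX) \le 1$ together with the hypothesis $bc \nmid bX$ to force $c \nmid X$, so that $\lcm(X, cM') = c\lcm(X, M')$, a direct calculation shows this S-polynomial equals the monomial $-c^2 \lcm(X, M') \in (I, a+b+c)$, placing $c^2 \lcm(X, M') \in \ini(I, a+b+c)$.

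For the $\subseteq$ direction I would verify that the set $G$ consisting of $a+b+c$, every $M \in \mathcal{G}(I)$, and the S-polynomials produced above is a Gr\"obner basis of $(I, a+b+c)$; reading off leading monomials then yields the containment. By Buchberger's criterion this reduces to checking that all remaining S-pairs among elements of $G$ reduce to zero modulo $G$. The three hypotheses are exactly what keeps the bookkeeping manageable: the degree bounds $d_a(I), d_b(I), d_c(I) \le 1$ limit how often each of $a, b, c$ can appear anywhere; the condition that $a \mid M$ implies $b \mid M$ or $c \mid M$ splits the relevant generators of $I$ cleanly into the two shapes $abX$ and $acM'$, and guarantees that reducing any such $M$ by $a+b+c$ leaves no pure-$a$ tail; and $bc \nmid M$ prevents the leading monomial $bcM'$ coming from $acM'$ from being absorbed by an element of $I$, which is precisely what makes the $c^2$-pairs genuinely new.

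The main obstacle will be verifying that no further generator beyond these three families appears during Buchberger's algorithm. The delicate S-pair is between $-bcM' - c^2M'$ and a second monomial $bX' \in \mathcal{G}(I)$: one must confirm that the outcome $c^2 \lcm(X', M')$ is divisible by a previously listed generator of the claimed form. Equally, S-pairs between two $c^2$-generators, and between a $c^2$-generator and $a+b+c$, need to be checked, and here the degree bounds on $a, b, c$ ensure that no higher powers of $c$ surface. Once these verifications are in place, $G$ is a Gr\"obner basis and its leading monomials are exactly $a$, the $\widehat{M}$, and the $c^2\lcm(X, M')$ displayed in the lemma.
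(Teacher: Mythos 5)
The paper does not prove this lemma at all---it is quoted verbatim from \cite[Lemma~4.1]{FHM-square}---so there is no in-paper argument to compare against; your explicit Gr\"obner-basis/Buchberger approach is the standard route and is essentially how the cited source proceeds. Your key computations are correct: for $M=aM_0$ the reduction $M-M_0(a+b+c)=-bM_0-cM_0$ exhibits $\widehat{M}=bM_0$, and the S-polynomial of $bX$ with $-bcM'-c^2M'$ does equal $c^2\lcm(X,M')$ (note that $bc\nmid bX$ alone already forces $c\nmid X$; the bound $d_c(I)\le 1$ is not needed for that step). The remaining S-pairs do all reduce to zero---most by the coprimality criterion since $a\nmid M_0$, and the pair of $-b^2X-bcX$ with $-bcM'-c^2M'$ cancels identically---so the sketch can be completed; one small imprecision is your description of hypothesis (b) as preventing a ``pure-$a$ tail'': its actual role is to exclude generators $aM_0$ with $b,c\nmid M_0$, whose reduction $-bM_0-cM_0$ would generate new leading terms of the form $c\,\lcm(M_0,X)$ (degree one in $c$) via S-pairs with the $bX$'s, terms not accounted for in the statement.
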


The next proposition establishes a connection between the minimal associated primes of an ideal and its initial ideal with a trinomial.

\begin{proposition}\label{mainprop1}
Let $R$ be a polynomial ring, $I$ a monomial ideal in $R$, and $a,b,c $ distinct variables in $R$ such that $\mathcal{G}(I)= \left\{ab, ac, M_1,\ldots, M_p\right\}$ with  $a \nmid M_i$ for $1 \le i \le p$. Moreover, suppose that $d_b(I) = d_c(I) = 1$. Let $I_1 = \ini(I, a+b+c)$, where $>$ is an order such that $a > b > c$. If $Q \in\Min(R/I_1)$, then there exists $P\in\Min(R/I)$ such that either $Q = (P,a)$ if $c\in P$ or $Q = (P,b)$ if $c\not\in P$. 
\end{proposition}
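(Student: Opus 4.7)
The plan is to first apply Lemma~\ref{lemmatrinomial}, whose hypotheses hold here ($d_a(I)=1$ since $a$ appears only in $ab, ac$; $d_b(I)=d_c(I)=1$ by assumption; and every $a$-divisible generator is $ab$ or $ac$, hence also divisible by $b$ or $c$), granted the additional assumption $bc \nmid M_i$ for all $i$. This yields
\[
I_1 = \bigl\langle a,\, b^2,\, bc,\, M_1, \ldots, M_p \bigr\rangle + \bigl\langle Y_i c^2 : b \mid M_i,\ M_i = bY_i \bigr\rangle.
\]
Since $a, b^2 \in I_1$, every $Q \in \Min(R/I_1)$ contains both $a$ and $b$, and the natural dichotomy is whether $c \in Q$.

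If $c \in Q$, write $Q = (a, b, c, x_1, \ldots, x_r)$ and set $P := (b, c, x_1, \ldots, x_r)$. The inclusion $I \subseteq P$ is immediate from $ab, ac$, while each $M_i \in I_1 \subseteq Q$ combined with $a \nmid M_i$ forces some variable of $P$ to divide $M_i$. For minimality of $P$, any prime $P' \subsetneq P$ containing $I$ must (from $ab$, $ac$, and $a \notin P$) contain both $b$ and $c$, so some $x_j$ is missing from $P'$; the prime $Q' := (P',a) \subsetneq Q$ then contains $I_1$, generator by generator ($a$ trivially, $b^2$ and $bc$ from $b, c \in P'$, each $M_i$ from $I \subseteq P'$, each $Y_i c^2$ from $c \in P'$), contradicting the minimality of $Q$. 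Hence $P \in \Min(R/I)$, $c \in P$, and $Q = (P, a)$.

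If $c \notin Q$, write $Q = (a, b, x_1, \ldots, x_r)$ with no $x_j$ equal to $c$, and set $P := (a, x_1, \ldots, x_r)$. The subtle point in verifying $I \subseteq P$ is the case $b \mid M_i$: a priori the variable of $Q$ witnessing $M_i \in Q$ could be $b$ itself, which is absent from $P$. This is where the generator $Y_i c^2 \in I_1$ (with $M_i = b Y_i$) plays its role: since $Y_i c^2 \in Q$ and $c \notin Q$, some variable of $Q$ divides $Y_i$, and it cannot be $a$ (as $a \nmid M_i$) or $b$ (as $d_b(I) = 1$ rules out $b^2 \mid M_i$), so it is some $x_j \in P$. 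Minimality of $P$ follows by a parallel argument: any prime $P' \subsetneq P$ containing $I$ must contain $a$ (forced by $ac$ since $c \notin P$), and upon omitting some $x_j$ from $P'$ the prime $Q' := (P', b) \subsetneq Q$ is seen to contain $I_1$, contradicting the minimality of $Q$.

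The main obstacle is the second case, and in particular the bookkeeping with the auxiliary generators $Y_i c^2$ from Lemma~\ref{lemmatrinomial}: these are precisely what force some $x_j$ (rather than $b$ itself) to divide $M_i$ whenever $b \mid M_i$, and they are the feature that genuinely distinguishes the trinomial setting from the binomial one handled in Lemmas~\ref{leaf} and~\ref{leafpair}.
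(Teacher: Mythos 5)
Your proof is correct and follows essentially the same route as the paper's: apply Lemma~\ref{lemmatrinomial} to obtain the explicit generators of $I_1$, observe that $a,b\in Q$ always, split on whether $c\in Q$, take $P=(b,c,x_1,\ldots,x_r)$ or $P=(a,x_1,\ldots,x_r)$ accordingly, use the auxiliary generators $M_i'c^2$ together with $d_b(I)=1$ in the $c\notin Q$ case to produce a witness for $M_i$ other than $b$, and establish minimality of $P$ in both cases by contradicting the minimality of $Q$. Your remark that Lemma~\ref{lemmatrinomial} formally requires $bc\nmid M$ for all $M\in\mathcal{G}(I)$ --- a hypothesis not listed in the proposition and applied silently in the paper's proof --- is a fair and slightly more careful observation than the original.
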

\begin{proof}
Let $Q\in\Min(R/I_1)$. Without loss of generality, let $M_1,\ldots,M_r$ be the monomials such that $b\mid M_i$ for $1 \le i\le r$, that is $M_i = bM'_i$ for some monomial $M'_i$ and $b\nmid M_j$ for any $j>r$. If there do not exist such monomials, then $r=0$ and the argument still goes through. By Lemma~\ref{lemmatrinomial}, we have that
$$I_1 = (a, b^2, bc, M_1,\ldots, M_p, M'_1c^2, \ldots, M'_rc^2).$$
Notice that from the description of $I_1$, one can see that $a, b\in Q$ for any $Q\in\Min(R/I_1)$.  Hence we may write $Q = (a,b,y_1,\ldots,y_t)$ for some distinct variables $y_1, \ldots, y_t$. We consider two possible cases whether $c\in Q$ or $c\not\in Q$. 

First, assume $c\notin Q$. Let $P = (a, y_1,\ldots, y_t)$. We will show that $P\in\Min(R/I)$ and thus $Q=(P,b)$ in this case. Since  $a\nmid M_i$ for $1\le i\le p$, then $a\nmid M_i'c^2$ for $1\le i\le r$ and hence it follows that $M_1,\ldots, M_p, M'_1c^2,\ldots, M'_rc^2 \in (b, y_1,\ldots, y_t)$. Notice that since $d_b(I)=1$, then for each $1\le i\le r$, $b\nmid M'_ic^2$. Hence for each $1\le i \le r$ there exists $1\le j\le t$ such that $y_j \mid M'_ic^2$. Since $y_j \ne c$, then $y_j\mid M'_i$ and hence $y_j\mid M_i$. Thus $M_1, \ldots, M_r \in (y_1, \ldots, y_t)$. Moreover, since $b\nmid M_i$ for each $r+1 \le i\le p$, then $M_{r+1}, \ldots, M_p\in (y_1, \ldots, y_t)$. Therefore, $I\subseteq P$. Suppose now that $P$ is not minimal, that is there exists $P'\in\Min(R/I)$ such that $P'\subsetneq P$. Then, without loss of generality, we may assume that $y_1\notin P'$. Notice that $y_1 \ne a$ and hence  $I\subseteq P'\subseteq (a, y_2,\ldots, y_t)$. For $1\le i\le r$, there exists $j$ with $2\le j \le t$ such that $y_j\mid M_i = bM'_i$. Since $y_j\ne b$, then $y_j\mid M'_i$ and thus, $y_j\mid M'_ic^2$. For $r+1\le i\le p$, there exists $j$ with $2\le j\le t$ such that $y_j\mid M_i$. Therefore,  $M_1,\ldots, M_p, M'_1c^2,\ldots, M'_rc^2 \in (y_2,\ldots, y_t)$. Hence, $I_1\subseteq (a, b, y_2,\ldots, y_t)\subsetneq Q$, a contradiction to the minimality of $Q$. Therefore, $P\in\Min(R/I)$ and $Q = (P, b)$.

 For the remaining case, assume $c\in Q$. Then we may write $Q = (a, b, c, y_1,\ldots, y_t)$. Let $P = (b, c, y_1,\ldots, y_t)$. It suffices to show that $P\in\Min(R/I)$. Clearly, $b\mid ab$ and $c\mid ac$. For $1\le i\le r$, by our assumption, $b\mid M_i$. For each $M_i$ with $r+1\le i\le p$ such that $c\nmid M_i$, there exists $y_j\mid M_i$ for some $1\le j\le t$. Thus, $I\subseteq P$. Suppose there exists $P'\in\Min(R/I)$ such that $P'\subsetneq P$. Notice that since $a\notin P$, then $a\notin P'$, and hence $b, c\in P'$. Then without loss of generality, we may assume that $y_1\notin P'$. We see that $I\subseteq P'\subseteq (b, c, y_2,\ldots, y_t)$. Clearly, one can see that $M_1,\ldots, M_r, M'_1c^2,\ldots, M'_rc^2 \in (b, c)$. Moreover, for each $M_i$ with $r+1\le i\le t$ such that $c\nmid M_i$, there exists $y_j\mid M_i$ for some $1\le j\le t$. Therefore, $I_1\subseteq (a, b, c, y_2,\ldots, y_t)\subsetneq Q$, which contradicts to the minimality of $Q$. Therefore, $P\in\Min(R/I)$ and $Q = (P, a)$. \end{proof}

Our next goal is to describe the embedded associated primes of an arbitrary monomial ideal $I$. We will use the concept of polarization, a convenient tool that will help us to work with the minimal associated primes in the polarized ring. We recall the definition of polarization below.
\begin{definition}\cite[Definition~2.1]{Faradi-polarization}\label{pola-def}
    Let $R = k[x_1,\ldots,x_n]$ be a polynomial ring over a field $k$. Suppose $M = x_1^{a_1}\cdots x_n^{a_n}$ is a monomial in $R$ with $a_i \ge 1$ for all $1\le i\le n$. We define the \textit{polarization} of $M$ to be the squarefree monomial 
    $$\mathcal{P}(M) = x_{1,1}x_{1,2}\cdots x_{1,a_1}\cdots x_{n,1}\cdots x_{n,a_n}$$
    in the polynomial ring $R^{\pol} = k[x_{i,j} \mid 1\le i \le n, 1\le j \le a_i]$.

    If $I$ is an ideal of $R$ generated by a set of monomials $M_1,\ldots,M_q$, then the polarization of $I$ is defined as:
$$I^{\pol} = (\mathcal{P}(M_1),\ldots, \mathcal{P}(M_q))$$
which is a squarefree monomial ideal in the polynomial ring $R^{\pol}$.
\end{definition}
The following lemma establishes the connection between the associated primes of an ideal and its polarization. 
\begin{lemma}\cite[Corollary~2.6]{Faradi-polarization}\label{polarization primes}
Let $I$ be a monomial ideal in a polynomial ring $R = k[x_1,\ldots,x_n]$, and let $I^{\pol}$ be its polarization in $R^{\pol}= k[x_{i,j}]$. Then $(x_{i_1},\ldots, x_{i_r})\in\Ass(R/I)$ if and only if $(x_{i_1,c_1},\ldots, x_{i_r,c_r})\in\Ass(R^{\pol}/I^{\pol})$ for some positive integers $c_1,\ldots, c_r$.
\end{lemma}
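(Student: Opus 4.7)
The plan is to pass through the standard characterization of associated primes of a monomial ideal $J$ via its irredundant irreducible decomposition $J=\bigcap_\alpha Q_\alpha$, under which $\Ass(R/J)=\{\sqrt{Q_\alpha}\}$ with each $Q_\alpha=(x_{j_1}^{a_1},\ldots,x_{j_k}^{a_k})$; irredundancy says dropping any generator of $Q_\alpha$ yields an ideal not containing $J$. Since $I^{\pol}$ is squarefree, $\Ass(R^{\pol}/I^{\pol})=\Min(R^{\pol}/I^{\pol})$, and each such minimal prime is generated by a subset of the variables $\{x_{i,j}\}$. The bridge I would use throughout is the polarization dictionary $x_{i,c}\mid \P(M)\iff d_{x_i}(M)\ge c$.

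For the forward direction, I would start from $P=(x_{i_1},\ldots, x_{i_r})\in\Ass(R/I)$, pick an irreducible component $Q=(x_{i_1}^{c_1},\ldots, x_{i_r}^{c_r})$ of $I$ with $\sqrt{Q}=P$, and show $Q':=(x_{i_1,c_1},\ldots,x_{i_r,c_r})\in\Min(R^{\pol}/I^{\pol})$. Containment $I^{\pol}\subseteq Q'$ is immediate: each generator $\P(M)$ of $I^{\pol}$ has $M\in I\subseteq Q$, so $x_{i_l}^{c_l}\mid M$ for some $l$, and the dictionary gives $x_{i_l,c_l}\mid\P(M)$. For minimality, irredundancy of $Q$ provides, for each $l$, some $M_l\in\mathcal{G}(I)$ with $d_{x_{i_k}}(M_l)<c_k$ for all $k\ne l$; the dictionary then places $\P(M_l)$ outside $(x_{i_k,c_k}: k\ne l)$. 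Since the minimal primes of a monomial ideal are themselves monomial primes, this rules out any prime strictly between $I^{\pol}$ and $Q'$.

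For the backward direction, given $Q'=(x_{i_1,c_1},\ldots,x_{i_r,c_r})\in\Min(R^{\pol}/I^{\pol})$, reversing the dictionary yields $I\subseteq Q:=(x_{i_1}^{c_1},\ldots,x_{i_r}^{c_r})$ and, for each $l$, a generator $M_l\in\mathcal{G}(I)$ with $x_{i_l}^{c_l}\mid M_l$ and $d_{x_{i_k}}(M_l)<c_k$ for every $k\ne l$. Writing $M_l=x_{i_l}^{c_l}\cdot w_l\cdot v_l$ with $w_l$ supported in $\{x_{i_k}: k\ne l\}$ and $v_l$ in variables outside $\{x_{i_1},\ldots, x_{i_r}\}$, I would take as witness
\[
m=\prod_{l=1}^r x_{i_l}^{c_l-1}\cdot \mathrm{lcm}(v_1,\ldots,v_r).
\]
By construction $M_l\mid x_{i_l}\cdot m$, so $P\subseteq (I:m)$. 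Conversely, for any variable $y\notin P$, every exponent $d_{x_{i_l}}(y\cdot m)=c_l-1$ is strictly less than $c_l$, so no generator of $I$ can divide $y\cdot m$ (else that generator would fall outside $Q$, contradicting $I\subseteq Q$); the same bound shows $m\notin I$. Thus $(I:m)=P$, proving $P\in\Ass(R/I)$.

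The main obstacle is the translation between irredundancy of an irreducible component of $I$ and minimality of the corresponding monomial prime of $I^{\pol}$. Once this dictionary is set up, both implications reduce to direct verification, but extracting the witness monomial $m$ in the backward direction and confirming that the $x_{i_l}$-degree ceiling $c_l-1$ prevents $y\cdot m$ from entering $I$ for any outside variable $y$ requires careful use of the containment $I\subseteq Q$ together with the existence of the $M_l$.
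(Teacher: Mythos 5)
First, a caveat on the comparison itself: the paper does not prove this statement --- it is quoted verbatim from Faridi's \emph{Monomial ideals via squarefree monomial ideals} (Corollary~2.6) --- so there is no internal proof to measure your argument against; I can only assess it on its own terms. Your overall strategy (irreducible decomposition of $I$ on one side, minimal monomial primes of the squarefree ideal $I^{\pol}$ on the other, linked by the dictionary $x_{i,c}\mid \mathcal{P}(M)\iff d_{x_i}(M)\ge c$) is the right one, and the forward direction goes through, provided you justify the ``irredundancy'' property you invoke by the uniqueness of the irredundant irreducible decomposition of a monomial ideal: if $I\subseteq (x_{i_k}^{c_k}\mid k\ne l)$ you could replace the component $Q$ by this strictly smaller irreducible ideal, producing after pruning a second irredundant decomposition not containing $Q$.

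The backward direction, however, has a genuine gap. Minimality of $Q'=(x_{i_1,c_1},\ldots,x_{i_r,c_r})$ over $I^{\pol}$ gives you, for each $l$, a generator $M_l$ with $d_{x_{i_l}}(M_l)\ge c_l$ and $d_{x_{i_k}}(M_l)<c_k$ for $k\ne l$; it does \emph{not} give $d_{x_{i_l}}(M_l)=c_l$, which is what your factorization $M_l=x_{i_l}^{c_l}\cdot w_l\cdot v_l$ (with $w_l,v_l$ prime to $x_{i_l}$) silently assumes. When $d_{x_{i_l}}(M_l)>c_l$, the witness $m=\prod_l x_{i_l}^{c_l-1}\cdot\mathrm{lcm}(v_1,\ldots,v_r)$ fails the containment $M_l\mid x_{i_l}m$, so $P\subseteq (I:m)$ breaks. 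Concretely, take $I=(x^3,y^2)$ in $k[x,y]$: then $I^{\pol}=(x_1x_2x_3,\,y_1y_2)$ and $(x_2,y_1)\in\Min(R^{\pol}/I^{\pol})$, so $c_1=2$, $c_2=1$ and your witness is $m=x$; but $x\cdot m=x^2\notin I$ and $(I:x)=(x^2,y^2)\ne (x,y)$. The lemma is of course still true here --- the correct witness is $x^2y$ --- but the point is that the exponents $c_l$ read off from $Q'$ do not by themselves determine a witness. The standard repair is to first pass from $Q=(x_{i_1}^{c_1},\ldots,x_{i_r}^{c_r})$ to an irreducible monomial ideal $(x_{i_1}^{a_1},\ldots,x_{i_r}^{a_r})$ with $a_l\ge c_l$ containing $I$ and minimal with this property; your monomials $M_l$ guarantee its radical is still $P$, and minimality now yields for each $l$ a generator $N_l$ with $d_{x_{i_l}}(N_l)=a_l$ \emph{exactly} and $d_{x_{i_k}}(N_l)<a_k$ for $k\ne l$, after which $m=\mathrm{lcm}(N_1,\ldots,N_r)/\prod_{l}x_{i_l}$ satisfies $(I:m)=P$ by exactly the degree-counting you describe.
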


In the setting of Lemma~\ref{polarization primes} for an associated prime $$W=(x_{i_1,c_1},\ldots, x_{i_r,c_r})\in\Ass(R^{\pol}/I^{\pol}),$$ we denote $W^{\depol}$ the depolarization of $W$, that is $W^{\depol}=(x_{i_1},\ldots, x_{i_r})\in\Ass(R/I)$.  

 We introduce a special set of neighbors that we will use for the next theorem. 

\begin{definition}\label{star neighbors}
  Let $I$ be a monomial ideal in a polynomial ring. For a variable $w$, we define the set
$$N^*(w) = \left\{z \in R \mid z \mbox{ is a variable, } z\neq w,  \exists M\in\mathcal{G}(I) \mbox{ with } zw\mid M \mbox{ and }d_w(M) < d_w(I)\right\}.$$

\end{definition}
We give an example to illustrate these special neighbor sets. 
\begin{example}
Let $R=k[a,b,c,d]$ be a polynomial ring over a field $k$ and let $I=(a^2bc, ad, b^3cd)$ be a monomial ideal in $R$. Notice that $N^{*}(w)\neq \emptyset$ for $w=a,b$ since $d_{a}(ad)<d_a(a^2bc)$ and $d_b(a^2bc) <d_b(b^3cd)$. Moreover, $N^{*}(a)=\{d\}$ and $N^{*}(b)=\{a,c\}$. 
\end{example}

We are now ready to prove the main theorem of this section. We show that for an arbitrary monomial ideal $I$, any embedded associated prime of $R/I$ can be expressed as the sum of a minimal associated prime of $R/I$ with an ideal of some additional appropriate variables.

The following theorem is the main theorem of this section. 
\begin{theorem}\label{mainthm}
Let $R$ be a polynomial ring over a field and  $I$ be a monomial ideal in $R$. Let $w_1,\ldots, w_n$ be the distinct variables in $R$ such that $N^{*}(w_i)\neq \emptyset$. 
Let $Q\in\Ass(R/I)$ be an embedded associated prime. Then $Q = (Q', z_1, z_2,\ldots, z_t)$ for some $Q'\in\Min(R/I)$ and $z_j\in N^*(w_i)$ for some $1\le i\le n$ and $1\le j\le t$.
\end{theorem}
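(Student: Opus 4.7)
\emph{Overall plan.} The approach is via polarization. Since $I^{\pol}$ is squarefree, $\Ass(R^{\pol}/I^{\pol})=\Min(R^{\pol}/I^{\pol})$. Given the embedded prime $Q=(x_{i_1},\ldots,x_{i_r})$, Lemma~\ref{polarization primes} furnishes a lift $W=(x_{i_1,c_1},\ldots,x_{i_r,c_r})\in\Min(R^{\pol}/I^{\pol})$ with $W^{\depol}=Q$. I shall construct iteratively a chain $Q=Q_0\supsetneq Q_1\supsetneq\cdots\supsetneq Q_s$ in $\Ass(R/I)$ with $Q_s\in\Min(R/I)$ and $Q_{k-1}\setminus Q_k\subseteq N^*(v_k)$ for some variable $v_k$ satisfying $N^*(v_k)\neq\emptyset$; taking $Q'=Q_s$ and $\{z_1,\ldots,z_t\}=Q\setminus Q_s$ will yield the conclusion.

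\emph{Selecting a swap.} The key subclaim is that there exists an index $\ell^*$ with $c_{\ell^*}\ge 2$ such that the modified set $W':=(W\setminus\{x_{i_{\ell^*},c_{\ell^*}}\})\cup\{x_{i_{\ell^*},1}\}$ is \emph{not} a minimal cover of $\mathcal{G}(I^{\pol})$. For if every such replacement preserved minimality, iterating these swaps (each strictly decreasing $\sum_j c_j$) would eventually produce a minimal cover with all second indices equal to $1$; since $x_i\mid M\iff x_{i,1}\mid\mathcal{P}(M)$, such a cover depolarizes to a minimal vertex cover of $\mathcal{G}(I)$, forcing $Q\in\Min(R/I)$ and contradicting the embedded hypothesis.

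\emph{Matching removed variables to $N^*$.} Fix such $\ell^*$ and reduce $W'$ to a minimal subcover $W''$. Essentiality of $x_{i_{\ell^*},c_{\ell^*}}$ in the minimal cover $W$, witnessed by some $\mathcal{P}(M_{\ell^*})$, transfers to essentiality of $x_{i_{\ell^*},1}$ in $W''$, so $x_{i_{\ell^*},1}$ is never dropped. Hence the dropped variables form $\{x_{i_j,c_j}:j\in S\}$ for some $S\subseteq\{1,\ldots,r\}\setminus\{\ell^*\}$. For each $j\in S$ pick $M_j\in\mathcal{G}(I)$ witnessing the essentiality of $x_{i_j,c_j}$ in $W$, i.e., $\mathcal{P}(M_j)$ is covered in $W$ solely by $x_{i_j,c_j}$. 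Then $x_{i_{\ell^*},c_{\ell^*}}\nmid\mathcal{P}(M_j)$ yields $d_{x_{i_{\ell^*}}}(M_j)<c_{\ell^*}\le d_{x_{i_{\ell^*}}}(I)$, while coverage of $\mathcal{P}(M_j)$ in $W''$ can only come from the newly inserted element $x_{i_{\ell^*},1}$, forcing $x_{i_{\ell^*}}\mid M_j$. Combining, $x_{i_j}x_{i_{\ell^*}}\mid M_j$ with $d_{x_{i_{\ell^*}}}(M_j)<d_{x_{i_{\ell^*}}}(I)$; hence $x_{i_j}\in N^*(x_{i_{\ell^*}})$, and in particular $N^*(x_{i_{\ell^*}})\neq\emptyset$, placing $x_{i_{\ell^*}}$ among the variables $w_i$ of the theorem.

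\emph{Iteration and main obstacle.} Set $Q_1:=(W'')^{\depol}$. Since $W''\in\Min(R^{\pol}/I^{\pol})$ has distinct first indices, Lemma~\ref{polarization primes} gives $Q_1\in\Ass(R/I)$; by construction $Q_1\subsetneq Q$ and $Q\setminus Q_1\subseteq N^*(x_{i_{\ell^*}})$. If $Q_1\in\Min(R/I)$ we are done; otherwise repeat the construction on $(Q_1,W'')$. The sequence terminates because $|Q_k|$ strictly decreases, and telescoping the successive differences yields $Q\setminus Q_s\subseteq\bigcup_{i=1}^n N^*(w_i)$, giving the required decomposition $Q=(Q_s,z_1,\ldots,z_t)$. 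The principal obstacle is the swap-selection argument, which rests on the auxiliary observation that a minimal cover of $I^{\pol}$ whose second indices are all $1$ depolarizes to a minimal cover of $\mathcal{G}(I)$; a secondary delicate point is extracting from a single witness $M_j$ all three conditions $x_{i_j}\mid M_j$, $x_{i_{\ell^*}}\mid M_j$, and $d_{x_{i_{\ell^*}}}(M_j)<d_{x_{i_{\ell^*}}}(I)$ required by Definition~\ref{star neighbors}.
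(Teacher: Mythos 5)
Your argument is correct, and it reaches the conclusion by a genuinely different route than the paper. The paper fixes an \emph{arbitrary} minimal prime $Q'\subsetneq Q$ and shows directly that every variable $v\in Q\setminus Q'$ lies in some $N^*(w_i)$: assuming otherwise, it analyzes the generators divisible by $v$, shows each is covered by a variable $t_N\in Q'$ whose degree is constant across all generators it divides (else $v\in N^*(t_N)$), and concludes that the polarized copy $v_q$ is redundant in the minimal cover $W$ lifting $Q$ via Lemma~\ref{polarization primes} --- a contradiction. You instead run a constructive descent: the index-lowering swap $x_{i,c}\mapsto x_{i,1}$ in the polarization, the observation that an all-index-one minimal cover depolarizes to a minimal cover of $\mathcal{G}(I)$, and the bookkeeping that every variable dropped when passing to a minimal subcover lands in $N^*(x_{i_{\ell^*}})$ (your verification of all three conditions of Definition~\ref{star neighbors} from the single witness $M_j$ is sound, including $c_{\ell^*}\le d_{x_{i_{\ell^*}}}(I)$). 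The paper's proof is shorter and yields the slightly stronger fact that the decomposition works for \emph{every} minimal prime contained in $Q$; yours produces one specific $Q'$ but gives extra structure, namely a chain $Q\supsetneq Q_1\supsetneq\cdots\supsetneq Q_s$ of associated primes interpolating between $Q$ and a minimal prime. One presentational point to tighten: the contrapositive in your swap-selection step should be phrased over all covers reachable from $W$ by minimality-preserving swaps (e.g., pick a reachable minimal cover minimizing $\sum_j c_j$ and argue that either all $c_j=1$, contradicting that $Q$ is embedded, or some swap from it breaks minimality); since such swaps do not change the first indices, the rest of your argument applies verbatim to that cover in place of $W$.
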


\begin{proof}
Suppose that $Q$ is an embedded associated prime ideal of $R/I$. Then there exists $Q'\in\Min(R/I)$ such that $Q'\subsetneq Q$. 
We claim that 
$$Q = (Q', z_1,\ldots, z_t \mid z_j \in N^*(w_i), 1\le i \le r, 1\le j\le t).$$

Since $Q'\subsetneq Q$, there exists a variable $v\in Q\setminus Q'$.
Suppose that $v\notin N^*(w_i)$ for all $1\le i\le r$. Let $$\mathcal{A} = \left\{N\in\mathcal{G}(I) \mid v\mid N\right\}.$$ Since $v\notin Q'$ and $Q'\in\Min(R/I)$, then for each $N\in\mathcal{A}$, there exists a variable $t_N\in Q'$ such that $vt_N \mid N$. Let $N = v^{m_N}t_N^{e_N}N'$ for some monomial $N'$, $m_N, e_N \ge 0$, and $v,t_N\nmid N'$.  By Lemma~\ref{polarization primes} we have that $Q = W^{\text{depol}}$ for some $W\in\Min(R^{\pol}/I^{\pol})$.  Let $t_{N, 1},\ldots, t_{N, e_N}$ be the polarizing variables of $t_N$ with $t_{N, 1} = t_N$ and $v_{1},\ldots, v_{m_N}$ be polarizing variables of $v$ with $v_1 = v$ in $R^{\pol}$.

We have two cases to consider, whether $v=w_i$ for some $i$ or if $v\neq w_i$ for all $i$.

First suppose that $v=w_{i_0}$ for some $1\le i_0 \le n$. By our assumption $w_{i_0}\not\in N^{*}(w_i)$ for all $i$.  Hence for each $N\in\mathcal{A}$, by our assumption, we have that $t_N\ne w_i$ for all $1\le i\le n$.  Since $v\in Q$, then there exists some $v_q \in W$ for some $1 \le q \le m_N$. If there exists some $L\in\mathcal{G}(I)$ with $0 < d_{t_N}(L) < e_N$ or $d_{t_N}(L) > e_N$ then $t_N = w_u$ for some $1 \le u \le r$, in other words, $v\in N^{*}(w_u)$, a contradiction. Hence, $d_{t_N}(I) = e_N$ and for each $M\in \mathcal{G}(I)$ such that $t_N \mid M$, then $d_{t_N}(M)=e_N$. Therefore, for each $N\in\mathcal{A}$, there exists $t_N\in Q'$ such that $d_{t_N}(N) = d_{t_N}(I)$. Since $t_N\in Q'$, then $t_N\in Q$. Hence $t_{N,j}\in W$ for some $1\le j\le e_N$. Thus, $v_q$ and $t_{N,j}$ are both in $W$, a contradiction to the minimality of $W$.

Finally, suppose that $v \ne w_i$ for all $1\le i\le n$. Since $v\ne w_i$, then $d_v(I) = d_v(N)$ for all $N\in\mathcal{A}$, that is $m_N = d_v(I) = m$ for some $m\in\mathbb{N}$. Since $v\in Q$, then there exists $v_q\in W$ for some $1 \le q \le m$. Notice that if $d_{t_N}(N) < d_{t_N}(I)$, then $t_N = w_{i_0}$ for some $1 \le i_0\le r$ and hence $v\in N^*(w_{i_0})$, which is a contradiction. Therefore, $e_N = d_{t_N}(I)$ for all $N\in\mathcal{A}$. Since $t_N \in Q'$, then $t_N\in Q$. Hence $t_{N,j} \in W$ for some $1 \le j\le e_N$ and  $v_q$ and $t_{N,j}$ are both in $W$, a contradiction to the minimality of $W$ as before.   \end{proof}

The next example illustrates the statement of Theorem~\ref{mainthm} and how the embedded associated primes arise from minimal associated primes.

\begin{example}
Consider the ideal $I = (a^3bc, a^2d,b^2c,ce^2,de,c^2f,eg)$ in the polynomial ring $R=\mathbb{Q}[a,b,c,d,e,f,g]$. Notice that $N^*(w)\neq \emptyset$ for $w\in \{a,b,c,e\}$.
Indeed, $N^*(a) = \left\{d\right\}$, $N^*(b) = \left\{a,c\right\}$, $N^*(c) = \left\{a, b, e\right\}$, $N^*(e) = \left\{d, g\right\}$. Using Macaulay $2$ \cite{m2},  the list of associated primes of $I$ is:
\begin{align*}
    P_1 &= (a, c, e), P_2 = (c, d, e), P_3 = (c, d, g),  P_4 = (a, b, e, f), P_5 = (b, d, e, f),\\
    Q_1 &= (a, b, c, e), Q_2 = (b, c, d, e), Q_3 = (a, b, c, d, e), Q_4 = (a, b, d, e, f),\\
    Q_5 &= (b, c, d, e, g), Q_6 = (b, d, e, f, g), Q_7 = (a, b, c, d, e, g), Q_8 = (a, b, d, e, f, g).
\end{align*}
Notice that $P_1, \ldots,  P_5 \in \Min(R/I)$ and $Q_1, \ldots, Q_8$ are embedded associated primes of $R/I$. Notice that $Q_1 = (P_1, b)$, $Q_2 = (P_2 , b)$, $Q_3 = (P_1 , b, d) = (P_2, a, b)$, $Q_4 = (P_4, d) = (P_5, a)$, $Q_5 = (P_2, b, g) = (P_3, b, e)$, $Q_6 = (P_5, g)$, $Q_7 = (P_1, b, d, g) = (P_2, a, b, g) = (P_3, a, b, e)$, $Q_8 = (P_4, d, g) = (P_5, a, g)$.

In particular, notice that $Q_7=(P_1,b,d,g)$ with $b\in N^{*}(w_3)$ and $d,g\in N^{*}(w_4)$, showing that the variables in an embedded associated prime can come from one or more of $N^{*}(w_i)$.
\end{example}

For any monomial $M$, let $\widetilde{M}$ denote the squarefree part of $M$. That is if $M=x_1^{a_1}\cdots x_n^{a_n}$, then $\widetilde{M}=x_1\cdots x_n$.

\begin{corollary}\label{no embedded}
Let $I$ be a monomial ideal in a polynomial ring $R$ and suppose that for every variable $x\in R$ we have $d_{x}(M)=d_x(I)$ for every monomial $M \in \mathcal{G}(I)$ such that $x\mid M$. Then $R/I$ has no embedded primes. Furthermore, $\Min (R/I)=\Min(R/J)$, where $J=\langle \widetilde{M}\mid M\in \mathcal{G}(I)\rangle$.
\end{corollary}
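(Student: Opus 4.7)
The plan is to apply Theorem~\ref{mainthm} directly, after observing that the hypothesis forces the sets $N^{*}(w)$ to be empty.

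First I would check that under the hypothesis, $N^{*}(w) = \emptyset$ for every variable $w$ of $R$. Recall from Definition~\ref{star neighbors} that $z \in N^{*}(w)$ requires the existence of some $M \in \mathcal{G}(I)$ with $zw \mid M$ and, crucially, $d_{w}(M) < d_{w}(I)$. Since the hypothesis says $d_{w}(M) = d_{w}(I)$ for every $M \in \mathcal{G}(I)$ divisible by $w$, no such $M$ can exist. So $N^{*}(w) = \emptyset$ for every variable $w$.

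Next I would apply Theorem~\ref{mainthm} to the ideal $I$. Suppose, for contradiction, that $R/I$ admits an embedded associated prime $Q$. By Theorem~\ref{mainthm}, we can write $Q = (Q', z_1, \ldots, z_t)$ for some $Q' \in \Min(R/I)$ and variables $z_j \in N^{*}(w_i)$ with $N^{*}(w_i) \ne \emptyset$. But by the previous step, no such $w_i$ exists, so there are no available $z_j$ and the list is empty. This forces $Q = Q' \in \Min(R/I)$, contradicting that $Q$ is embedded. Hence $R/I$ has no embedded primes.

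For the second assertion, I would note that $J$ coincides with the radical $\sqrt{I}$: a standard fact for monomial ideals is that $\sqrt{(M_1, \ldots, M_q)} = (\widetilde{M_1}, \ldots, \widetilde{M_q})$, since each $\widetilde{M_i}$ is the smallest squarefree monomial having a power in $(M_i) \subseteq I$. As passing to the radical does not change the set of minimal primes, we conclude $\Min(R/I) = \Min(R/\sqrt{I}) = \Min(R/J)$.

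Since every step is a direct invocation of the main theorem or an elementary property of monomial ideals, there is really no serious obstacle; the only thing to get right is the bookkeeping check that the hypothesis is exactly the negation of the condition defining $N^{*}(w) \ne \emptyset$.
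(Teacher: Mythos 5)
Your proposal is correct and takes essentially the same route as the paper, whose entire proof is the observation that the hypothesis forces $N^{*}(x_i)=\emptyset$ for all $i$ so that Theorem~\ref{mainthm} leaves no room for embedded primes. Your explicit justification of the second assertion via $J=\sqrt{I}$ (the standard fact that the radical of a monomial ideal is generated by the squarefree parts of its generators) is a correct filling-in of a detail the paper leaves implicit.
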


\begin{proof}
This follows immediately from Theorem~\ref{mainthm} by noticing that $N^*(x_i)=\emptyset$ for all $i$.
\end{proof}

The following example illustrates Corollary~\ref{no embedded}

\begin{example}
Let $R=k[x_1,\ldots, x_4]$ and let $I=(x_1^3x_2,x_2x_3^2, x_3^2x_4^4,x_1^3x_4^4)$. Then $I$ has the same associated primes as $J=(x_1x_2,x_2x_3,x_3x_4,x_1x_4)$, that is 
$$\Ass(R/I)=\Min(R/I)=\Ass(R/J)=\Min(R/J)=\{(x_1,x_3), (x_2,x_4)\}.$$    
\end{example}

Recall that for an ideal $I$ in a Noetherian ring $R$ we have $I^{[n]}=\{f^n \mid f\in I\}$. The following shows that these types of ideals have no embedded associated primes when $I$ is squarefree or satisfies the assumptions of Corollary~\ref{no embedded}.

\begin{corollary}\label{bracket powers}
Let $I$ be either a squarefree monomial ideal or a monomial ideal that satisfies the assumptions of Corollary~\ref{no embedded} in a polynomial ring $R$. Then for all $n\ge 1$, $I^{[n]}$ has no embedded associated primes. 
\end{corollary}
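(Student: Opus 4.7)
The plan is to verify in both cases that $I^{[n]}$ itself satisfies the hypotheses of Corollary~\ref{no embedded}, so that the conclusion is immediate from that corollary. The key structural observation is that bracket powers preserve the property of having uniform $x$-degree across generators, for every variable $x$.

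First I would observe that if $\mathcal{G}(I)=\{M_1,\ldots,M_p\}$, then $\mathcal{G}(I^{[n]})=\{M_1^n,\ldots,M_p^n\}$. Indeed, for monomials $M_i,M_j$ one has $M_i\mid M_j$ if and only if $d_x(M_i)\le d_x(M_j)$ for every variable $x$, if and only if $M_i^n\mid M_j^n$. Thus minimality of the generating set is preserved under taking $n$-th powers, so the displayed list is exactly the minimal monomial generating set of $I^{[n]}$.

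Next I would check the degree condition. Fix any variable $x$ and any generator $M_j^n$ with $x\mid M_j^n$; equivalently $x\mid M_j$. Then $d_x(M_j^n)=n\cdot d_x(M_j)$, and $d_x(I^{[n]})=n\cdot d_x(I)$. In the squarefree case, $d_x(M_j)=1=d_x(I)$ for every such $M_j$, so $d_x(M_j^n)=n=d_x(I^{[n]})$. In the case where $I$ satisfies the hypothesis of Corollary~\ref{no embedded}, $d_x(M_j)=d_x(I)$ for every $M_j$ with $x\mid M_j$, hence $d_x(M_j^n)=n\cdot d_x(I)=d_x(I^{[n]})$. Either way, $I^{[n]}$ satisfies the hypothesis of Corollary~\ref{no embedded}, so $R/I^{[n]}$ has no embedded associated primes.

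I do not expect any real obstacle here: the result is an essentially formal consequence of Corollary~\ref{no embedded} once one notes that raising every generator to the $n$-th power multiplies every relevant degree by $n$ and therefore preserves the uniformity condition $d_x(M)=d_x(I)$ for $x\mid M$. (In particular the two cases of the statement overlap, since any squarefree monomial ideal trivially satisfies the hypothesis of Corollary~\ref{no embedded}.)
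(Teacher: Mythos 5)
Your proof is correct and is exactly the argument the paper leaves implicit: the corollary is stated without proof, being an immediate consequence of Corollary~\ref{no embedded} once one notes that $\mathcal{G}(I^{[n]})=\{M^n\mid M\in\mathcal{G}(I)\}$ and that raising generators to the $n$-th power scales every $x$-degree by $n$, hence preserves the uniformity condition $d_x(M)=d_x(I)$ for $x\mid M$ (equivalently, $N^*(x)=\emptyset$ for all $x$). Your verification that minimality of the generating set is preserved, and your observation that the squarefree case is subsumed by the other, are both accurate.
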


Finally, we determine another type of regular element on a monomial ideal. This is a generalization of \cite[Lemma~3.9]{FHM-ini}, where the degree of all but one variable was required to be $1$.

\begin{corollary}\label{new regular}
Let $I$ be a monomial ideal in a polynomial ring $R$ and suppose that for every variable $x\in R$ we have $d_{x}(M)=d_x(I)$ for every monomial $M \in \mathcal{G}(I)$ such that $x\mid M$.
Let $b_0, b_1, \ldots, b_t$ be distinct variables in $R$ such that if $b_0\mid M$ for some monomial $M\in \mathcal{G}(I)$, then $b_i \mid M$ for some $1\le i\le t$. If $f=b_0+\ldots +b_t$, then $f$ is regular on $R/I$.
\end{corollary}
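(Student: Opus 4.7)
The plan is to recognize that $f$ is regular on $R/I$ if and only if $f$ avoids every associated prime of $R/I$, and then use the structural information about $\Ass(R/I)$ coming from Corollary~\ref{no embedded}. The hypothesis on $I$ is exactly the hypothesis of Corollary~\ref{no embedded}, so $R/I$ has no embedded primes and $\Ass(R/I)=\Min(R/I)=\Min(R/J)$, where $J=\langle \widetilde{M}\mid M\in\mathcal{G}(I)\rangle$ is a squarefree monomial ideal. In particular, every associated prime of $R/I$ is generated by a set of variables.

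Next, I would argue by contradiction: suppose $f=b_0+b_1+\cdots+b_t$ lies in some $P\in\Ass(R/I)$. Since $P$ is a monomial ideal generated by variables and $f$ is a sum of distinct variables, the only way $f\in P$ is if each monomial $b_i$ lies in $P$; since $b_i$ is itself a variable and $P$ is generated by variables, each $b_i$ must be one of those generators. In particular $b_0$ is a minimal generator of $P$, and $P$ contains all of $b_0,b_1,\dots,b_t$.

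The heart of the argument is then to produce a prime strictly smaller than $P$ still containing $I$, contradicting $P\in\Min(R/I)$. Let $P'$ be the prime obtained from $P$ by removing $b_0$ from its set of variable generators, so $P'\subsetneq P$. I claim $I\subseteq P'$: take any $M\in\mathcal{G}(I)$. If $b_0\nmid M$, then since $M\in P$, some variable generator of $P$ divides $M$, and it cannot be $b_0$, so this generator lies in $P'$ and hence $M\in P'$. If $b_0\mid M$, then the hypothesis on $b_0,\dots,b_t$ yields some $1\le i\le t$ with $b_i\mid M$; since $b_i\in P$ and $b_i\ne b_0$, we have $b_i\in P'$, whence $M\in P'$. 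In both cases $M\in P'$, so $I\subseteq P'\subsetneq P$, contradicting the minimality of $P$.

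The only real subtlety is the step that $f\in P$ forces every $b_i$ to be an individual generator of $P$; this relies crucially on the fact that Corollary~\ref{no embedded} ensures every associated prime is a monomial prime (generated by variables), so there is no cancellation possible among the linear terms of $f$. With that in hand, everything else reduces to a direct combinatorial check on the generators of $I$, and no computation with initial ideals is needed beyond the application of Corollary~\ref{no embedded}.
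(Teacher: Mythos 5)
Your proof is correct, and it shares its first step with the paper's: both invoke Corollary~\ref{no embedded} to conclude that $R/I$ has no embedded primes, so that regularity of $f$ only needs to be checked against the minimal primes, each of which is generated by variables. Where you diverge is in the second step. The paper simply cites \cite[Lemma~3.9]{FHM-ini} applied to the squarefree ideal $J=\langle \widetilde{M}\mid M\in\mathcal{G}(I)\rangle$, using $\Min(R/I)=\Min(R/J)$ to transfer regularity from $R/J$ back to $R/I$; you instead give a direct, self-contained argument that $f$ avoids every $P\in\Min(R/I)$, by deleting $b_0$ from the generators of $P$ and checking $I\subseteq P'$ via the neighborhood hypothesis on $b_0,\ldots,b_t$ --- in effect you reprove the cited lemma in the case at hand, and you do not need to pass through $J$ at all. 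Your argument is sound (the deletion step correctly splits into the cases $b_0\nmid M$ and $b_0\mid M$), and it buys independence from the external reference at the cost of a little length. One small correction: the fact that $f\in P$ forces every $b_i\in P$ does not ``rely crucially'' on Corollary~\ref{no embedded} --- every associated prime of any monomial ideal is generated by variables --- rather, what Corollary~\ref{no embedded} buys you is that every associated prime is \emph{minimal}, which is what makes your deletion argument yield a contradiction.
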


\begin{proof}
 First notice that by Corollary~\ref{no embedded} we know $R/I$ has no embedded associated primes and in fact $\Min(R/I)=\Min(R/J)$, where $J=\langle \widetilde{M} \mid M \in \mathcal{G}(I)\rangle$. By \cite[Lemma~3.9]{FHM-ini} $f$ is regular on $R/J$ and therefore, $f$ is regular on $R/I$.   
\end{proof}

We close with an example of the new type of regular elements obtained in Corollary~\ref{new regular}.

\begin{example}
Let $R=\mathbb{Q}[x_1, x_2, x_3, x_4, x_5]$ and let $I=(x_1^3x_2^4,x_1^3x_5^2, x_2^4x_3^2, x_3^2x_4^5,x_4^5x_5^2)$. Since $d_{x_i}(I)>1$ for all $i$, then the results in \cite{FHM-ini} do not apply here. However, $I$ satisfies the assumptions of Corollary~\ref{new regular}. Let $J=\langle \widetilde{M} \mid M \in \mathcal{G}(I)\rangle$ and notice that $J=(x_1x_2,x_1x_5,x_2x_3,x_3x_4,x_4x_5)$, which is the edge ideal of a pentagon. The element $f=x_1+x_2+x_5$ is regular on $R/J$, by \cite[Lemma~3.9]{FHM-ini} and since $\Ass(R/J)=\Ass(R/I)$, then $f\notin \Ass(R/I)$, or in other words, $f$ is regular on $R/I$.
\end{example}

\section{Initially regular sequences on cycles and the depth of unicyclic graphs}\label{depth section}
In this section, we will use the description of the associated primes of monomial ideals from Section~\ref{associated primes} to construct initially regular sequences on cycles. Moreover, using these sequences we will compute accurately the depth of certain unicyclic graphs. 

We first recall the following result that gives the depth of any cycle. For $n\ge3$ let $C_n$ denote a cycle on $n$ variables and let $I(C_n)$ be the edge ideal of the cycle in $R=k[x_1, \ldots, x_n]$, where $k$ is a field, that is $I(C_n)=(x_1x_2, x_2x_3, \ldots, x_{n-1}x_n, x_1x_n)$. Note that $[n]=\{1, \ldots, n\}$ for any $n\in \mathbb{N}$.

  \begin{theorem} \cite[Corollary~7.6.30]{Jacq}\label{Jacq}
Let $n\ge3$, $C_n$ be a cycle on $n$ vertices, and $I(C_n)$ be the edge ideal of the cycle in the ring $R=k[x_1, \ldots, x_n]$, where $k$ is a field. Then $$\depth(R/I(C_n)) = \lceil\dfrac{n-1}{3}\rceil.$$    
   \end{theorem}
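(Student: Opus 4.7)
The plan is to prove the depth formula by combining a short-exact-sequence reduction for the lower bound with Hochster's formula plus the known homotopy type of the independence complex $\mathrm{Ind}(C_n)$ for the upper bound. Throughout I would use the well-known companion formula $\depth(k[x_1,\ldots,x_m]/I(P_m))=\lceil m/3\rceil$ for a path on $m$ vertices, itself a routine induction via the depth lemma that is recorded in many places.

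For the lower bound, I would single out a vertex $x_1$ of $C_n$ and consider the short exact sequence
$$0 \lar R/(I(C_n):x_1)(-1) \stackrel{\cdot x_1}{\lar} R/I(C_n) \lar R/(I(C_n),x_1) \lar 0.$$
A direct computation gives $(I(C_n):x_1)=(x_2,x_n)+J$, where $J$ is the edge ideal of the path $x_3 - x_4 - \cdots - x_{n-1}$ on $n-3$ vertices, and $(I(C_n),x_1)=(x_1)+J'$ with $J'$ the edge ideal of the path $x_2 - x_3 - \cdots - x_n$ on $n-1$ vertices. Since $x_1$ does not appear in $J$, one obtains
$$\depth R/(I(C_n):x_1) = 1+\lceil (n-3)/3\rceil = \lceil n/3\rceil \quad\text{and}\quad \depth R/(I(C_n),x_1) = \lceil (n-1)/3\rceil,$$
and the depth lemma yields $\depth R/I(C_n) \geq \min\{\lceil n/3\rceil,\lceil (n-1)/3\rceil\} = \lceil (n-1)/3\rceil$, uniformly across the three residue classes of $n$ modulo $3$.

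For the reverse inequality, I would invoke Hochster's formula, which identifies the top multigraded Betti number $\beta_{n-i-1,[n]}(R/I(C_n))$ with $\dim_k \widetilde{H}_i(\mathrm{Ind}(C_n);k)$, where $\mathrm{Ind}(C_n)$ is the Stanley--Reisner (independence) complex of $C_n$. Kozlov's theorem then supplies the homotopy types
$$\mathrm{Ind}(C_{3k}) \simeq S^{k-1}\vee S^{k-1}, \quad \mathrm{Ind}(C_{3k+1}) \simeq S^{k-1}, \quad \mathrm{Ind}(C_{3k+2}) \simeq S^{k},$$
so in all three cases the top reduced homology of $\mathrm{Ind}(C_n)$ sits in dimension $\lceil (n-1)/3\rceil-1$. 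Hochster therefore forces $\pd R/I(C_n) \geq n-\lceil (n-1)/3\rceil$, and Auslander--Buchsbaum converts this into $\depth R/I(C_n) \leq \lceil (n-1)/3\rceil$, matching the lower bound. The principal obstacle, if one declines to quote Kozlov, is precisely to produce these homotopy equivalences by an inductive cofibration argument that removes a vertex together with its closed star and reduces $\mathrm{Ind}(C_n)$ to $\mathrm{Ind}(C_{n-3})$; this parallels the depth recursion above and is the only piece requiring genuinely topological input.
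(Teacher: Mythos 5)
The paper does not prove this statement at all: it is quoted verbatim from Jacques' thesis (Corollary~7.6.30), where it arises as a byproduct of computing all the graded Betti numbers of $R/I(C_n)$ via the homology of induced independence complexes. Your argument is a correct, self-contained derivation modulo the standard inputs you cite. Both halves check out: $(I(C_n):x_1)=(x_2,x_n)+I(P_{n-3})$ with $x_1$ a free variable gives depth $1+\lceil (n-3)/3\rceil=\lceil n/3\rceil$, $(I(C_n),x_1)$ gives $\lceil (n-1)/3\rceil$, and since $\lceil n/3\rceil\ge\lceil (n-1)/3\rceil$ the depth lemma yields the lower bound; your indexing of Hochster's formula is consistent ($\beta_{j,[n]}=\dim_k\widetilde{H}_{n-j-1}(\mathrm{Ind}(C_n))$), Kozlov's homotopy types place the top nonvanishing reduced homology in dimension $\lceil (n-1)/3\rceil-1$ in all three residue classes, and Auslander--Buchsbaum closes the gap. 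Your upper bound is essentially the route of the cited source (restricted to the single Betti number that matters), while your lower bound via the colon/quotient short exact sequence and \cite[Theorem~4.3]{depthmodulo}-type depth comparison is exactly the technique this paper deploys later for the unicyclic graphs $G_{n,m}$ (Propositions~\ref{G_{n,1}}, \ref{m=2 case}, Theorem~\ref{G_nm}); it is worth noting that this is also a cleaner illustration of why the paper needs initially regular sequences at all, since the purely homological lower bound here requires no construction of explicit sequences. The only caveat is that your proof is not independent of nontrivial external input (Kozlov's computation, or equivalently an inductive cofibration argument reducing $\mathrm{Ind}(C_n)$ to $\mathrm{Ind}(C_{n-3})$), which you correctly flag as the one genuinely topological step.
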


\begin{remark}\label{sequences}
Let $n\ge 1$  and let $I_1=I(C_{3n})$ denote the edge ideal of a cycle of length $3n$ in the polynomial ring $R_1=k[x_1, \ldots, x_{3n}]$. Let $I_2=I(C_{3n+1})$ 
denote the edge ideal of a cycle of length $3n+1$ in $R_2=k[x_1, \ldots, x_{3n+1}]$ and let $I_3=I(C_{3n+2})$ 
denote the edge ideal of a cycle of length $3n+2$ in $R_3=k[x_1, \ldots, x_{3n+2}]$. According to  Theorem~\ref{Jacq} we have $\depth R_1/I_1=\depth R_2/I_2=n$, whereas $\depth R_3/I_3=n+1$. 

Let $h_1 = x_1+x_{3n} + x_2$ and $h_i=x_{3i-2}+x_{3i-3}+x_{3i-1}$ for all $2\le i\le n$ and observe that $h_1, \ldots, h_n$ is an initially regular sequence on $R_1/I_1$ that realizes the depth of $R_1/I_1$, by \cite[Theorem~3.11]{FHM-ini}.  Similarly, let $g_1 = x_1+x_{3n+1} + x_2$ and $g_i=x_{3i-2}+x_{3i-3}+x_{3i-1}$ for all $2\le i\le n$. Then $g_1, \ldots, g_n$
 is an initially regular sequence that realizes the depth of $R_2/I_2$. However, \cite[Theorem~3.11]{FHM-ini} does not provide a method to construct an initially regular sequence of length $n+1$ that would realize the depth of $R_3/I_3$. In fact, \cite[Theorem~3.11]{FHM-ini} shows that there is an initially regular sequence of length $n$ on $R_3/I_3$, that is $f_1 = x_1+x_{3n+2} + x_2$, $f_i=x_{3i-2}+x_{3i-3}+x_{3i-1}$ for $2\le i\le n$ is an initially regular sequence on $R/I$ with respect to an appropriate term order.
\end{remark}

As noted in Remark~\ref{sequences} we have initially regular sequences that realize the depth for $I(C_{3n})$ and $I(C_{3n+1})$, for any $n$. Moreover, we have an initially regular sequence of length $n$ on $R/I(C_{3n+2})$. The next theorem establishes an initially regular sequence that realizes the depth for the edge ideal of a cycle of length $3n+2$ for $n\ge 1$. 

\begin{theorem}\label{C_{3n+2}}
Let $n\ge 1$, $C_{3n+2}$ be a cycle on $3n+2$ vertices, and let $I=I(C_{3n+2})$ be the edge ideal of the cycle $C_{3n+2}$ in the ring $R=k[x_1, \ldots, x_{3n+2}]$. 
Let $f_1 = x_1+x_{3n+2} + x_2$,  $f_i=x_{3i-2}+x_{3i-3}+x_{3i-1}$ for $2\le i\le n $, and  $f_{n+1}=x_{3n} + x_{3n+1} + \sum \limits_{i=1}^{n}x_{3i-1}$.
The sequence $f_1, \ldots, f_{n+1}$
is an initially regular sequence on $R/I$ with respect to a term order such that $x_1 > x_{3n+2} > x_2$, and $x_{3i-2}>x_{3i-3}>x_{3i-1}$, for all $2\le i\le n $.
\end{theorem}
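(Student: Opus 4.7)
The plan is to verify the initially regular sequence in two stages. First, confirm that $f_1, \ldots, f_n$ form an initially regular sequence on $R/I$; this is essentially the content of \cite[Theorem~3.11]{FHM-ini} as already observed in Remark~\ref{sequences}. For this stage one checks inductively that at each step $i = 1, \ldots, n$ the triple $(x_{3i-2}, x_{3i-3}, x_{3i-1})$ (resp.\ $(x_1, x_{3n+2}, x_2)$ when $i=1$) satisfies the hypotheses of Lemma~\ref{lemmatrinomial} with respect to the current ideal $I_i$, so $I_{i+1} = \ini(I_i, f_i)$ may be written down explicitly, and $f_i$ avoids every minimal prime via Proposition~\ref{mainprop1} and every embedded prime via Theorem~\ref{mainthm}.

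Tracking the iterated trinomial construction, the ideal $I_{n+1}$ acquires the following explicit form: it contains the leading variables $x_{3j-2}$ for $j = 1, \ldots, n$; the squares $x_{3n+2}^2, x_3^2, x_6^2, \ldots, x_{3n-3}^2$; the new edges $x_{3n+2} x_2$ (from $f_1$) and $x_{3i-3} x_{3i-1}$ for $i = 2, \ldots, n$; the trinomial lcm-terms $x_{3n+1} x_2^2$ and $x_{3i-4} x_{3i-1}^2$ for $i = 2, \ldots, n$; together with the surviving original cycle edges $x_{3j-1} x_{3j}$ for $j = 1, \ldots, n$, $x_{3n} x_{3n+1}$, and $x_{3n+1} x_{3n+2}$. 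Passing to the radical, the variables $x_1, x_3, x_4, x_6, \ldots, x_{3n-3}, x_{3n-2}, x_{3n+2}$ all lie in $\sqrt{I_{n+1}}$, and on the remaining $n+2$ variables $x_2, x_5, \ldots, x_{3n-1}, x_{3n}, x_{3n+1}$ the reduced edge structure is precisely an $(n+2)$-cycle with successive edges $x_2 x_5, x_5 x_8, \ldots, x_{3n-4} x_{3n-1}, x_{3n-1} x_{3n}, x_{3n} x_{3n+1}, x_{3n+1} x_2$. Iterating Proposition~\ref{mainprop1} shows that the minimal primes of $R/I_{n+1}$ correspond bijectively to the minimal vertex covers of this $C_{n+2}$, enlarged by the killed variables. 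Since $f_{n+1}$ is the sum of all $n+2$ vertices of $C_{n+2}$ and a cycle on at least three vertices admits no minimal vertex cover equal to its full vertex set, $f_{n+1}$ lies in no minimal prime of $R/I_{n+1}$.

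The main obstacle is ruling out embedded primes containing $f_{n+1}$. Because $f_{n+1}$ is a sum of distinct variables, it lies in a monomial prime $P$ only if every variable appearing in $f_{n+1}$ lies in $P$; together with the killed variables this forces $P = \mathfrak{m}$, so the task reduces to showing $\mathfrak{m} \notin \Ass(R/I_{n+1})$. My plan is to polarize $I_{n+1}$ via Definition~\ref{pola-def} and apply Lemma~\ref{polarization primes} to translate this into the statement that no minimal vertex cover of the polarized hypergraph simultaneously contains $x_{3n}, x_{3n+1}$ and meets each of the polarization fibers $\{x_2, x_2'\}, \{x_5, x_5'\}, \ldots, \{x_{3n-1}, x_{3n-1}'\}$. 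The polarized hypergraph has edges $\{x_{3i-3}, x_{3i-3}'\}$ and $\{x_{3n+2}, x_{3n+2}'\}$ arising from the squares, together with hyperedges $\{x_{3n+1}, x_2, x_2'\}$ and $\{x_{3i-4}, x_{3i-1}, x_{3i-1}'\}$ arising from the degree-$2$ monomials. The combinatorial claim is that these hyperedges together with the ``remaining-cycle'' edges impose enough rigidity that any minimal vertex cover including both $x_{3n}$ and $x_{3n+1}$ must make some primed copy redundant and hence omit at least one polarization fiber; verifying this uniformly in $n$, most naturally by an inductive exchange argument on minimal covers, is the delicate core of the proof. An alternative approach, avoiding polarization, is to use Theorem~\ref{mainthm} to enumerate the candidate embedded primes as extensions of minimal primes by variables in $\bigcup_w N^*(w)$, and then directly exhibit witnesses showing that the candidate equal to $\mathfrak{m}$ is not actually associated.
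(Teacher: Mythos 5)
Your overall architecture matches the paper's: reduce to showing that $f_{n+1}$ avoids every associated prime of the ideal obtained after processing $f_1,\ldots,f_n$ (the paper calls it $I_n$, you call it $I_{n+1}$; I follow your indexing), describe that ideal explicitly via Lemma~\ref{lemmatrinomial}, and treat minimal and embedded primes separately, invoking polarization for the latter. Your explicit list of generators agrees with the paper's, and two of your reformulations are correct and arguably cleaner. For minimal primes, passing to $\sqrt{I_{n+1}}$ and observing that it is generated by $2n$ variables together with the edge ideal of an $(n+2)$-cycle on the support of $f_{n+1}$ immediately shows that no minimal prime contains all of $x_2,x_5,\ldots,x_{3n-1},x_{3n},x_{3n+1}$ (you do not even need Proposition~\ref{mainprop1} for this); the paper instead iterates Proposition~\ref{mainprop1} and runs a three-case analysis. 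For embedded primes, your observation that any monomial prime containing $f_{n+1}$ must contain its entire support, hence, together with the killed variables, must equal $\mathfrak{m}$, is a genuine simplification: it reduces the embedded-prime half to the single claim $\mathfrak{m}\notin\Ass(R/I_{n+1})$.

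That single claim, however, is exactly the hard part of the theorem, and you do not prove it. You correctly identify the polarized hyperedges $\{x_{3n+1},x_2,x_2'\}$ and $\{x_{3i-4},x_{3i-1},x_{3i-1}'\}$, but the assertion that these ``impose enough rigidity'' that no minimal vertex cover meets every polarization fiber while containing $x_{3n}$ and $x_{3n+1}$ is precisely what requires an argument, and ``an inductive exchange argument on minimal covers'' is a plan, not a proof; your alternative route (enumerate candidates via Theorem~\ref{mainthm} and ``exhibit witnesses'') is likewise unexecuted. The paper's proof of this step is a concrete descent: writing $Q=(Q',z_1,\ldots,z_t)$ with $Q'$ minimal, it splits into three cases according to which of $x_{3n-1},x_{3n},x_{3n+1}$ lies in $Q\setminus Q'$, and in each case propagates along the chain of generators $x_{3j-1}x_{3j+2}^2$ (polarizing to $x_{3j-1}x_{3j+2}x_{3j+2}'$): from $x_{3n+1}\in W$ one gets $x_{3n-1}\notin W$, so $x_{3n-1}'\in W$ forces $x_{3n-4}\notin W$, and so on down to $x_2$, where the generator $x_{3n+1}x_2^2$ yields a contradiction. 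Until you carry out this descent (or an equivalent socle computation showing that no monomial $c\notin I_{n+1}$ satisfies $x_ic\in I_{n+1}$ for all $i$), the proof is incomplete, even though everything surrounding this core is sound.
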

\begin{proof}

We fix some notation. For all $i\in [n]$ and $2\le j \le n$, let $a_i=x_{3i-2}$, $b_1=x_{3n+2}$, $b_{j}=x_{3j-3}$, $c_i=x_{3i-1}$. Then $f_i=a_i+b_i+c_i$ for all $i\in[n]$. By \cite[Theorem~3.11]{FHM-ini}, we have that $f_1, f_2,\ldots, f_n$ is an initially regular sequence on $R/I$ with respect to a term order such that $x_1 > x_{3n+2} > x_2$, and $x_{3i-2}>x_{3i-3}>x_{3i-1}$, for all $2\le i\le n $. It remains then to show that $f_{n+1}$ is regular on $R/I_n$. It suffices to show that $f_{n+1}\not\in \Ass(R/I_n)$.

Applying Lemma~\ref{lemmatrinomial} repeatedly, we can give a complete description of $I_n$, that is,
\begin{align*}
    I &= (x_1x_2, x_2x_3,\ldots, x_{3n}x_{3n+1}, x_{3n+1}x_{3n+2}, x_1x_{3n+2})\\
    I_1 &= \ini(I,f_1)=(x_1, x_{i}x_{i+1}, x_{3n+2}x_2, x_{3n+2}^2,x_{3n+1}x_2^2\mid 2\le i\le 3n+1)\\
           &\,\,\,\,\vdots\\
    I_n &= \ini(I_{n-1},f_n) \\
        &=(x_{3i-2}, x_{3i-1}x_{3i}, x_{3n}x_{3n+1}, x_{3n+1}x_{3n+2}, x_{3n+2}^2, x_{3j}^2, x_{3n+1}x_2^2, x_{3j-1}x_{3j+2}^2,  \\
        & \hspace{.7cm} x_{2}x_{3n+2}, x_{3j}x_{3j+2},\mid i\in[n], j\in[n-1]).\\
            \end{align*}

Let $Q \in \Min(R/I_n)$.  Using  Proposition~\ref{mainprop1} repeatedly, there exists $P\in \Min(R/I)$ such that 
$$Q = (P, a_{{i_1}},\ldots,a_{{i_k}},b_{{i_{k+1}}},\ldots, b_{{i_n}}),$$
for some $ i_j\in [n]$ such that $i_j \ne i_r$ for all $j\ne r$. We make the following observations:
\begin{itemize}
    \item If $a_{{i_j}} = x_1$, for some $i_j$, then $x_1\notin P$ and thus $x_{3n+2}\in P$. Hence either $x_{3n}\not\in P$ or $x_{3n+1}\notin P$ by the minimality of $P$.
    \item If $b_{{i_j}} = x_{3n-3}$, for some $i_j$, then $x_{3n-3}\notin P$ and thus  $x_{3n-2}\in P$. Hence either $x_{3n-1}\not\in P$ or  $x_{3n}\notin P$ by the minimality of $P$.
    \item If $a_{{i_j}} \ne x_1, b_{{i_j}} \ne x_{3n-3}$ for all $i_j$, then $x_1,x_{3n-3}\in P$. So $x_{3n+2}, x_{3n-2}\notin P$,  by Proposition~\ref{mainprop1}. Thus, $x_{3n+1}, x_{3n-1}\in P$, and hence $x_{3n}\notin P$.
\end{itemize}
Therefore, $f_{n+1}\notin Q$ for any $Q\in\Min(R/I_n)$.

Now, we let $Q$ be an embedded associated prime of $R/I_n$.
Then there exists $Q'\in \Min(R/I_n)$  and variables $z_1, \ldots, z_t$ such that  
$$Q = (Q', z_1,\ldots,z_t).$$

Notice that $x_{3n+2}, x_{3j} \in Q'$ for all $j\in[n-1]$. Following the notation of Theorem~\ref{mainthm}, we have $w_1 = x_{3n+2}$, $w_i=x_{3i}$ for $2\le i\le n$ and for $1\le j\le n$ we have $w_{n+j}=x_{3j-1}$. That is these are the $w_i$ such that $N^*(w_i)\neq \emptyset$. Consequently, we obtain $N^{*}(x_{3n+2})=\{x_{3n+1}, x_2\}$, $N^{*}(x_{3i})=\{x_{3i-1}, x_{3i+2}\}$, $N^{*}(x_2)=\{x_3, x_{3n+2}\}$, and $N^{*}(x_{3j-1})=\{x_{3j}, x_{3j-3}\}$ for all $2\le i\le n$ and $2\le j \le n$. Therefore, by Theorem~\ref{mainthm}, we have that $z_1,\ldots,z_t\in \{x_{3i-1}, x_{3n}, x_{3n+1} \mid 1\le i\le n\}$. 

We write $Q = W^{\text{depol}}$ for some $W\in\Min(R^{\pol}/I_n^{\pol})$. By Lemma~\ref{lemmatrinomial}, we see that the degrees of $x_1,\ldots, x_{3n-1}, x_{3n+2}$ are $2$, so for each $i\in\left\{1, 2,\ldots, 3n-1, 3n+2\right\}$, by abuse of notation, we let $x_i$ and $x_i'$ denote the polarizing variables of $x_i$ in $R^{\pol}$. Notice that, since $Q'$ is minimal, then not all $x_{3n-1}, x_{3n}, x_{3n+1}$ are in $Q'$. Moreover, there are at most $2$ of them in $Q'$. If one of these is not in $Q$, then we are done. Therefore, we consider the following cases:
\begin{itemize}
    \item  If $x_{3n}\in Q \setminus Q'$, then $x_{3n-1}, x_{3n+1}\in Q'$. Since $x_{3n+1}\in Q'\subseteq Q$ then $x_{3n+1}\in W$, since $\deg_{x_{3n+1}}(I_n)=1$. Thus, $x_{3n-1}\notin W$. So $x_{3n-1}'\in W$ and thus $ x_{3n-4}\notin W$. If $x_{3n-4}'\notin W$, then $x_{3n-4}\notin Q$, and we are done. Otherwise, if $x_{3n-4}'\in W$, then $x_{3n-7}\notin W$. If $x_{3n-7}'\notin W$, then $x_{3n-7}\notin Q$, and we are done again. Otherwise, if $x_{3n-7}'\in W$, then $x_{3n-10}\notin W$. Continuing this way, we may assume assume that $x_{3i-1}'\in W$ for $2\le i\le n$. In particular, $x_5'\in W$ and thus $x_2\notin W$. Since $x_{3n+1}\in W$, then $x_2'\notin W$. Thus, $x_2\notin Q$, as desired.
    \item  If $x_{3n+1}\in Q\setminus Q'$, then $\left\{x_2,x_{3n}, x_{3n+2}\right\}\subseteq Q'$. Since $x_{3n}\in Q'\subseteq Q$, then $x_{3n}\in Q$, and hence $x_{3n}\in W$. Since $x_{3n+1}\in Q$ then $x_{3n+1}\in W$. So $x_{3n-1}\notin W$. If $x_{3n-1}\notin Q$ then we are done. Otherwise, we have $x_{3n-1}'\in W$. If $x_{3n-4}\notin Q$, again, we are done. Otherwise, we have $x_{3n-4}'\in W$. Continuing this way, we may assume that $x'_{3i-1}\in W$ for $2\le i\le n$. In particular, $x_5'\in W$ and hence $x_2\notin W$. Since $x_2\in Q_n$, then $x_2\in Q$. Hence, $x_2'\in W$. On the other hand, since $x_{3n+1}\in W$, then $x_2'\notin W$, a contradiction.
    \item  If $x_{3n-1}\in Q\setminus Q'$, then $\{x_{3n}, x_{3n-3}, x_{3n-4}\}\subseteq Q'$. Since $x_{3n}\in Q'\subseteq Q$, then $x_{3n}\in Q$ and hence $x_{3n}\in W$. If $x_{3n+1}\notin Q$, then we are done. Otherwise, if $x_{3n+1}\in Q$, then $x_{3n+1}\in W$. Then $x_{3n-1}\notin W$. If $x_{3n-1}\notin Q$, then we are done. Otherwise, we have $x_{3n-1}'\in W$. So, $x_{3n-4}\notin W$. Since $x_{3n-4}\in Q'$, then $x_{3n-4}\in Q$. Hence $x_{3n-4}'\in W$. By a similar argument, we may assume that $x_{3i-1}'\in W$ for $2\le i\le n$. In particular, $x_5'\in W$ and hence $x_2\notin W$. If $x_2\notin Q$, again, we are done. Otherwise, $x_2'\in W$. Then $x_{3n+1}\notin W$, a contradiction. 
\end{itemize}
Therefore, $f_{n+1}\notin Q$ for any embedded associated prime $Q$ in $R/I_n$. Hence, $f_{n+1}$ is regular on $R/I_n$, completing the proof.
\end{proof}

Next, we will give an explicit formula to calculate the depth of certain unicyclic graphs defined below.

\begin{definition}\label{unicyclic def}
Let  $n\ge 3, m\ge 2$, $A_n = k[x_1,\ldots,x_n]$,  $B_m = k[y_1,\ldots,y_m]$, and $R_{n,m} = k[x_1,\ldots,x_n,y_1,\ldots,y_m]$.
Let $G_{n,m}$ denote the graph whose edge ideal is $$I(G_{n,m}) = (I(C_n), x_{2}y_1, I(P_m)),$$ where $C_n$ is the $n$ cycle on variables $x_1,\ldots, x_n$ and $P_m$ is a path on variables $y_1,\ldots,y_m$.  By convention, we denote $I(G_{n,0}) = I(C_n)$ and $I(G_{n,1}) = (I(C_n), x_2y_1)$. 
Below is a figure showing the graph $G_{n,m}$ for arbitrary $n, m$.

\begin{center}
  \begin{tikzpicture}
  \filldraw (0,3.8) circle(.3ex); 
  \filldraw (0,1) circle(.3ex);
    \filldraw (0.9,3.7) circle(.3ex);
    \filldraw (1.6,3.2) circle(.3ex); 
    \filldraw (-1.6,3.2) circle(.3ex);
    \filldraw (2,2.4) circle(.3ex); 
    \filldraw (-2,2.4) circle(.3ex);
    \filldraw (1.6,1.6) circle(.3ex);\filldraw (-1.6,1.6) circle(.3ex);
    \filldraw (0.9,1.1) circle(.3ex);
    \filldraw (-0.9,1.1) circle(.3ex);
    \filldraw (-0.9, 3.7) circle(.3ex);
    \filldraw (1.2, 4.7) circle(.3ex); \filldraw (2.2, 4.7) circle(.3ex);\filldraw (3.2, 4.7) circle(.3ex); \filldraw (3.8, 4.7) circle(.1ex);\filldraw (4.1, 4.7) circle(.1ex);\filldraw (4.4, 4.7) circle(.1ex);\filldraw (5, 4.7) circle(.3ex);\filldraw (6, 4.7) circle(.3ex);
    
    \draw (0.9, 3.7) -- (0,3.8);\draw (0.9, 3.7) -- (1.6,3.2);\draw (1.6,3.2) -- (2, 2.4);\draw (2, 2.4) -- (1.6,1.6);\draw (1.6, 1.6) -- (0.9,1.1);\draw (0.9,1.1) -- (0,1);\draw (-0.9,1.1) -- (0,1); \draw (-0.9,1.1) -- (-1.6,1.6); \draw (-1.6,1.6) -- (-2,2.4);\draw (-2,2.4) -- (-1.6,3.2);\draw (-1.6,3.2) -- (-0.9,3.7);\draw (-0.9,3.7) -- (0,3.8); \draw (0.9,3.7) -- (1.2, 4.7) ; \draw (1.2, 4.7) -- (2.2, 4.7); \draw (2.2,4.7)--(3.2,4.7); \draw (5,4.7)--(6,4.7); 
     \node at (0,2.4) {$G_{n, m}$};
    \node at (0, 4) {$x_1$};
    \node at (1.2, 3.9) {$x_2$}; \node at (1.9, 3.2) {$x_3$};\node at (2.3, 2.4) {$x_4$}; \node at (1.9, 1.6) {$x_5$};
    \node at (-2.5,2.4) {$x_{n-2}$}; 
    \node at (-2.2, 3.2) {$x_{n-1}$}; \node at (-1.3, 3.9) {$x_{n}$};  \node at (1.4, 4.5) {$y_1$};  \node at (2.2, 4.5) {$y_2$};  \node at (3.2, 4.5) {$y_3$};     \node at (5, 4.5) {$y_{m-1}$};  \node at (6, 4.5) {$y_m$};   
    \end{tikzpicture}
    \end{center}
\end{definition}

To give a formula for the depth of $R_{n,m}/I(G_{n,m})$ we will make use of induction. The next proposition gives the depth of the edge ideal of the unicyclic graph $G_{n,1}$ for any $n\ge 3$.

\begin{proposition}\label{G_{n,1}}
  For $n\ge 3$,    $\depth(R_{n,1}/I(G_{n,1})) = \lceil \dfrac{n}{3}\rceil$. 
\end{proposition}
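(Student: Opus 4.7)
The plan is to establish matching lower and upper bounds of $\lceil n/3 \rceil$ for $\depth R_{n,1}/I(G_{n,1})$.

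For the lower bound, I would apply the depth lemma to the short exact sequence
$$0 \rar R_{n,1}/(I:x_2)(-1) \xrightarrow{\,\cdot x_2\,} R_{n,1}/I \rar R_{n,1}/(I, x_2) \rar 0,$$
where $I = I(G_{n,1})$. Using the colon formula for monomial ideals, a short computation gives $(I : x_2) = (x_1, x_3, y_1) + I(P_{n-3})$, where $P_{n-3}$ denotes the induced path on $x_4, \ldots, x_n$ (empty when $n=3$), and $(I, x_2) = (x_2) + I(P_{n-1})$, where $P_{n-1}$ is the path $x_3 - x_4 - \cdots - x_n - x_1$. Appealing to the well-known formula $\depth k[x_1,\ldots, x_m]/I(P_m) = \lceil m/3 \rceil$ and accounting for the free variables $x_2$ and $y_1$ in the respective quotients, one gets $\depth R_{n,1}/(I:x_2) = 1 + \lceil (n-3)/3 \rceil = \lceil n/3 \rceil$ and $\depth R_{n,1}/(I,x_2) = 1 + \lceil (n-1)/3 \rceil \ge \lceil n/3 \rceil$. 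The depth lemma then yields $\depth R_{n,1}/I \ge \lceil n/3 \rceil$.

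For the upper bound, I would use that $I(G_{n,1})$ is squarefree, hence radical, so every associated prime of $R_{n,1}/I$ is minimal and corresponds to a minimal vertex cover of $G_{n,1}$. By the Auslander--Buchsbaum formula, it suffices to exhibit a minimal vertex cover $V$ of $G_{n,1}$ with $|V| = (n+1) - \lceil n/3 \rceil$, which forces $\pd R_{n,1}/I \ge |V|$ and thus $\depth R_{n,1}/I \le \lceil n/3 \rceil$. The natural candidate is $V = \{y_1, x_1, x_3\} \cup V'$, where $V' \subseteq \{x_4, \ldots, x_n\}$ is a minimal vertex cover of the induced path $P_{n-3}$ of maximum cardinality. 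Using the standard fact that the largest minimal vertex cover of $P_m$ has size $m - \lceil m/3 \rceil$ (its complement is a minimum maximal independent set in $P_m$), a direct computation gives $|V| = 3 + (n-3) - \lceil (n-3)/3 \rceil = (n+1) - \lceil n/3 \rceil$. One then checks $V$ is a minimal vertex cover of $G_{n,1}$: $y_1$ is the only element of $V$ covering $x_2y_1$, $x_1$ and $x_3$ are the only covers of $x_1x_2$ and $x_2x_3$ respectively, and each $v \in V'$ is essential by the minimality of $V'$ in $P_{n-3}$.

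The main delicate point is the case $n \equiv 1 \pmod 3$: there both $R_{n,1}/(I:x_2)$ and $R_{n,1}/(I,x_2)$ have depth exactly $\lceil n/3 \rceil$, so the depth lemma alone does not pin down the upper bound and the depth of $R_{n,1}/I$ could a priori exceed $\lceil n/3 \rceil$. The explicit minimal vertex cover of size $(n+1) - \lceil n/3 \rceil$ is essential in this case to close the gap. In the other two residue classes modulo $3$, the depth lemma applied more sharply (via the sharper statement that $\depth B = \depth A$ whenever $\depth A < \depth C$) would already pin down the exact depth, but the vertex cover construction provides a uniform treatment of all three cases.
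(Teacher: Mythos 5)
Your proposal is correct, and its first half---the short exact sequence on multiplication by $x_2$, the identifications $(I:x_2)=(x_1,x_3,y_1)+I(P_{n-3})$ and $(I,x_2)=(x_2)+I(P_{n-1})$, and the resulting depths $\lceil n/3\rceil$ and $\lceil (n-1)/3\rceil+1$---coincides exactly with the paper's proof. Where you diverge is in closing the upper bound: the paper simply invokes \cite[Theorem~4.3]{depthmodulo} once it observes $\depth(R_{n,1}/(I,x_2))\ge\depth(R_{n,1}/(I:x_2))$, whereas you prove the upper bound from scratch by exhibiting a minimal vertex cover $V=\{y_1,x_1,x_3\}\cup V'$ of $G_{n,1}$ of cardinality $(n+1)-\lceil n/3\rceil$ and using $\depth(R_{n,1}/I)\le\dim(R_{n,1}/P_V)$ for the corresponding associated prime $P_V$. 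Your cover checks out (including the degenerate cases $n=3,4$ where $V'=\emptyset$), so this gives a self-contained, elementary replacement for the citation. One small correction to your closing remark: the case $n\equiv 1\pmod 3$ is not actually out of reach of the depth lemma. The inequality $\depth(R/(I:x_2))\ge\min\{\depth(R/I),\,\depth(R/(I,x_2))+1\}$ forces $\depth(R/I)\le\depth(R/(I:x_2))$ whenever $\depth(R/(I:x_2))\le\depth(R/(I,x_2))$---equality allowed, not just strict inequality---which covers all three residue classes here. So the vertex-cover argument is not strictly necessary, but it is valid and arguably more transparent than the black-box citation.
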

\begin{proof} Let $I = I(G_{n,1})$. Consider the following short exact sequence
    $$0 \longrightarrow R_{n,1}/(I : x_2) \longrightarrow R_{n,1}/I \longrightarrow R_{n,1}/(I, x_2) \longrightarrow 0.$$
    Let $J_1 = (I : x_2)$ and $J_2 = (I , x_2)$.
    We see that $J_1 = (I : x_2) = (J_1',x_1,x_3,y_1)$,
    where $J_1'=(x_4x_5, x_5x_6, \ldots, x_{n-1}x_n)$ is the edge ideal of a path. 
    Hence
    \begin{align*}
        \depth(R_{n,1}/J_1)        &= \depth\left(\dfrac{k[x_4,\ldots, x_n]}{J_1'}\right)[x_2]\\
        &= \lceil\dfrac{n-3}{3}\rceil+ 1= \lceil\dfrac{n}{3}\rceil, \quad \text{by \cite[Lemma~2.8]{M-depthoftree}.}
       \end{align*}
    Next notice that $J_2 = (J_2', x_2)$,
    where $J_2'$ is the edge ideal of path on $x_3,x_4,\ldots,x_{n-1},x_n,x_1$. Thus 
    \begin{align*}
        \depth(R_{n,1}/J_2)         &= \depth\dfrac{k[x_1,x_3,x_4,\ldots,x_n]}{J_2'} + 1\\
        &= \lceil\dfrac{n - 1}{3}\rceil + 1= \lceil\dfrac{n+2}{3}\rceil, \quad \text{by \cite[Lemma~2.8]{M-depthoftree}.}
    \end{align*} 
   Since $\depth(R_{n,1}/J_2) \ge \depth(R_{n,1}/J_1)$, then by \cite[Theorem~4.3]{depthmodulo} we have that $$\depth(R_{n,1}/I) = \depth(R_{n,1}/J_1) = \lceil\dfrac{n}{3}\rceil,$$
   as claimed.
\end{proof}

Before we can prove the formula for the depth of $R_{n,2}/I(G_{n,2})$ we show first a lower bound in a special case. 

\begin{lemma}\label{G_{3n+2,2}}
For $t\in\mathbb{N}$, $\depth(R_{3t+2,2}/I(G_{3t+2,2}) \ge t+2$.    
\end{lemma}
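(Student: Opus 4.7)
The plan is to apply the depth lemma to a short exact sequence, echoing the strategy of Proposition~\ref{G_{n,1}}. Set $I = I(G_{3t+2,2})$ and $R = R_{3t+2,2}$. The variable $y_1$ is a natural splitter: it is the internal vertex of the attached path and is adjacent to both the degree-three junction $x_2$ and the leaf $y_2$, so both the colon and the sum ideals simplify cleanly. I will consider
$$0 \to R/(I : y_1) \to R/I \to R/(I, y_1) \to 0$$
and show each outer term has depth at least $t+2$.

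For the sum side, adding $y_1$ absorbs the two generators $x_2 y_1$ and $y_1 y_2$, so $(I, y_1) = (y_1) + I(C_{3t+2})$. Hence
$$R/(I, y_1) \cong k[x_1, \ldots, x_{3t+2}, y_2]/I(C_{3t+2}),$$
where $y_2$ is a free variable. By Theorem~\ref{Jacq}, $\depth k[x_1,\ldots,x_{3t+2}]/I(C_{3t+2}) = \lceil (3t+1)/3 \rceil = t+1$, so adding the free $y_2$ gives $\depth R/(I, y_1) = t+2$.

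For the colon side, the generators of $I$ divisible by $y_1$ are $x_2 y_1$ and $y_1 y_2$, contributing $x_2$ and $y_2$ to $(I:y_1)$. Once $x_2$ is in the ideal it absorbs the cycle edges $x_1 x_2$ and $x_2 x_3$, so $(I:y_1) = (x_2, y_2, x_3 x_4, x_4 x_5, \ldots, x_{3t+1} x_{3t+2}, x_{3t+2} x_1)$. Modding out by $x_2$ and $y_2$, the remaining generators form the edge ideal of the path $x_3 - x_4 - \cdots - x_{3t+2} - x_1$ on $3t+1$ vertices, while $y_1$ is free. Using the path depth formula (as invoked in Proposition~\ref{G_{n,1}} via \cite[Lemma~2.8]{M-depthoftree}), this yields $\depth R/(I : y_1) = \lceil (3t+1)/3 \rceil + 1 = t+2$.

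With both outer terms having depth exactly $t+2$, the depth lemma gives $\depth R/I \ge \min\{\depth R/(I:y_1),\,\depth R/(I, y_1)\} = t+2$, as claimed. The plan is essentially bookkeeping: the only nontrivial inputs are Theorem~\ref{Jacq} and the path depth formula already in use in the paper, so I do not foresee any serious obstacle beyond carefully identifying which generators of $I$ are absorbed when passing to the colon and the sum ideals.
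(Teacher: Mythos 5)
Your proof is correct, but it takes a genuinely different route from the paper's. The paper proves the bound by exhibiting an explicit initially regular sequence of length $t+2$: it takes $f_1,\ldots,f_{t+1}$ as in Theorem~\ref{C_{3n+2}} and appends $f_{t+2}=y_2+y_1$, and the substance of the argument is a Gr\"{o}bner-basis/factorability induction (via \cite[Proposition~3.5]{FHM-ini} and \cite[Lemma~3.7]{FHM-ini}) showing that $y_1y_2$ remains the unique generator of each successive initial ideal divisible by $y_2$, so that \cite[Lemma~3.9]{FHM-ini} applies at the last step. You instead split on $y_1$ and observe that $(I:y_1)=(x_2,y_2)+I(P_{3t+1})$ and $(I,y_1)=(y_1)+I(C_{3t+2})$, each quotient carrying one free variable, so both outer terms have depth exactly $t+2$ by Theorem~\ref{Jacq} and the path formula from \cite[Lemma~2.8]{M-depthoftree}; the elementary half of the depth lemma then gives the bound. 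Your identifications of the colon and sum ideals check out, and your argument is shorter and does not depend on Theorem~\ref{C_{3n+2}} at all. What the paper's approach buys is an explicit initially regular sequence realizing the depth of $R_{3t+2,2}/I(G_{3t+2,2})$, which is the organizing theme of the paper. It is also worth noting that your choice of splitter $y_1$ is better adapted to this lemma than the $y_2$ used in Proposition~\ref{m=2 case}, precisely because it makes the two outer depths coincide and so avoids the finer case analysis of \cite[Theorem~4.3]{depthmodulo}.
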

\begin{proof} 
Let $f_1 = x_1+x_{3t+2} + x_2$,  $f_i=x_{3i-2}+x_{3i-3}+x_{3i-1}$ for $2\le i\le t $,  $f_{t+1}=x_{3t} + x_{3t+1} + \sum \limits_{i=1}^{t}x_{3i-1}$ and $f_{t+2} = y_2 + y_1$. Using an argument similar to the proof of Theorem~\ref{C_{3n+2}} one can see that $f_1, \ldots, f_{t+1}$ is an initially regular sequence on $R/I(G_{3t+2,2})$ with respect to a term order such that $x_1 > x_{3t+2} > x_2$, and $x_{3i-2}>x_{3i-3}>x_{3i-1}$, for all $2\le i\le t $. It remains to show that $f_{t+2}$ is regular on $I_{t+1}$, where 
$I_{i+1} = \ini(I_i, f_i)$ for $1 \le i \le t$ and $I_1 = \ini(I) = I$. 

By \cite[Lemma~3.7]{FHM-ini}, we see that $d_{y_1}(I_{t+1}) = 1$. Next, we will show that $y_1y_2$ is the only monomial generator that $y_2$ divides in $I_{t+1}$.  We will proceed by induction. Notice that $y_1y_2 \in I$ and it is the only monomial that $y_2$ divides in $I$. Now, for $1 \le i\le t$, suppose that $y_1y_2\in I_i$ and is the only monomial that $y_2$ divides in $I_i$. Consider $I_{i+1} = \ini(I_i, f_{i+1})$. Let $R_2 = k[x_{3i+1}, x_{3i}, x_{3i+2}]$ if $i<t$ and let $R_2 = k[x_{3t}, x_{3t+1}, x_{3j-1} \mid 2\le j\le t]$ if $i=t+1$. Let $R_1$ be the polynomial ring such that $R = R_1[x_{3i+1}, x_{3i}, x_{3i+2}]$ if $i<t$ and if $i=t+1$ let $R_1$ be the polynomial ring such that $R=R_1[x_{3t}, x_{3t+1}, x_{3j-1} \mid 2\le j\le t]$. It is easy to see that $I_{i+1}$ is $(R_1,R_2)$-factorable in the sense of \cite[Definition~3.2]{FHM-ini}. By \cite[Proposition~3.5]{FHM-ini}, the reduced Gr\"{o}bner basis of $I_{i+1}$ is $(R_1,R_2)$-factorable  and consists of elements of the form $\lcm(M_{i_1}, \ldots, M_{i\ell})g$, where $M_{i_1}, \ldots, M_{i_{\ell}}\in R_1$ are monomials such that $M_{i_j}$ divides a monomial generator of $I_{i}$ and $g\in R_2$. Let $N\in I_{i+1}$ be monomial such that $y_2 \mid N$. Notice that $N = \lcm(M_1,\ldots, M_p)\ini(g)$ for some  $M_1, \ldots, M_p\in R_1$ monomials that divide monomial generators of $I_i$ and $g\in R_2$. Since $g\in R_2$, then $\ini(g)\in R_2$ and hence $y_2 \mid \lcm(M_1, \ldots, M_p)$. Thus $y_2\mid M_j$ for some monomial $M_j$ in $R_1$. In other words, $M_j$ is a monomial in $I_i$ such that $y_2 \mid M_j$, and hence, by induction $M_j=y_1y_2$, see also the proof of \cite[Proposition~3.5]{FHM-ini}. So, $N$ is a multiple of $y_1y_2 \in I_i$, and thus for $N$ to be in the reduced Gr\"{o}bner basis of $I_{i+1}$ it must be that $N=y_1y_2$.

Since the only monomial that $y_2$ divides in $I_{t+1}$ is $y_1y_2$, it follows by \cite[Lemma~3.9]{FHM-ini} that $f_{t+2} = y_2 + y_1$ is regular on $R_{3t+2,2}/I_{t+1}$ with respect to an appropriate order as in the statement. Therefore, $f_1,\ldots,f_{t+2}$ is an initially regular sequence on $R/I$, with respect to aforementioned order and hence $\depth(R_{3t+2,2}/I)\ge t+2$.
\end{proof}

Next, we prove a general formula in the case of the unicyclic graph $G_{n,2}$ for any $n\ge 3$. 

\begin{proposition} \label{m=2 case}
    For $n\ge 3$,    $\depth(R_{n,2}/I(G_{n,2})) = \lceil \dfrac{n-1}{3}\rceil+1$. 
\end{proposition}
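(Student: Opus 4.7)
The plan is to split the argument into two cases based on $n \bmod 3$ and in each case apply a well-chosen short exact sequence that reduces the problem to graphs whose depth is already known, then invoke the depth lemma.

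For the case $n \equiv 0, 2 \pmod 3$, I would follow the same strategy as in the proof of Proposition~\ref{G_{n,1}} and use the short exact sequence
$$0 \longrightarrow R_{n,2}/(I:x_2) \longrightarrow R_{n,2}/I \longrightarrow R_{n,2}/(I, x_2) \longrightarrow 0.$$
A direct computation gives $(I:x_2) = (x_1, x_3, y_1, x_4x_5, x_5 x_6, \ldots, x_{n-1}x_n)$, so $R_{n,2}/(I:x_2)$ is a polynomial extension in $x_2, y_2$ of the coordinate ring of the path $x_4 - x_5 - \cdots - x_n$ on $n-3$ vertices; by \cite[Lemma~2.8]{M-depthoftree} its depth is $\lceil(n-3)/3\rceil + 2$. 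Similarly, $(I, x_2)$ is the edge ideal of the disjoint union of the path $x_3 - x_4 - \cdots - x_n - x_1$ on $n-1$ vertices and the edge $y_1y_2$, so its depth is $\lceil(n-1)/3\rceil + 1$. For $n = 3t$ both ends have depth $t+1$, while for $n = 3t+2$ both have depth $t+2$, so \cite[Theorem~4.3]{depthmodulo} gives $\depth R_{n,2}/I$ equal to the common value $\lceil (n-1)/3 \rceil + 1$.

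For the case $n = 3t+1$, the same choice of $x_2$ yields ends whose depths differ by one, which is precisely the ambiguous regime of the depth lemma. To avoid this obstacle, I would instead use the short exact sequence
$$0 \longrightarrow R_{n,2}/(I:y_2) \longrightarrow R_{n,2}/I \longrightarrow R_{n,2}/(I, y_2) \longrightarrow 0.$$
A direct check shows $(I:y_2) = (I(C_n), y_1)$, so $R_{n,2}/(I:y_2) \cong (k[x_1, \ldots, x_n]/I(C_n))[y_2]$, which by Theorem~\ref{Jacq} has depth $\lceil(n-1)/3\rceil + 1 = t+1$. On the other hand $(I, y_2) = (I(C_n), x_2y_1, y_2)$, so $R_{n,2}/(I, y_2) \cong R_{n,1}/I(G_{n,1})$, whose depth is $\lceil n/3 \rceil = t+1$ by Proposition~\ref{G_{n,1}}. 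Both ends agree, so \cite[Theorem~4.3]{depthmodulo} again yields the desired conclusion.

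The main obstacle, and the only real subtlety, is exactly the $n \equiv 1 \pmod 3$ case: there the uniform choice $x_2$ lands in the range where the depth lemma is inconclusive, and the trick is to notice that colon-ing by the terminal leaf $y_2$ collapses one end to the cycle (plus a free variable) and the other end to the already-analyzed unicyclic graph $G_{n,1}$, so that both ends have equal depth and the depth lemma yields equality. The rest of the computations are routine colon-and-sum manipulations together with the standard depth formula for paths and the disjoint-union additivity of depth.
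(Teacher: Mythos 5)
Your proof is correct, and in the two cases where the real work happens it takes a genuinely different route from the paper. The paper uses the colon-by-$y_2$ short exact sequence uniformly: the two ends then have depths $\lceil\frac{n-1}{3}\rceil+1$ and $\lceil\frac{n}{3}\rceil$, which coincide only when $n\equiv 1\pmod 3$; for $n\equiv 0,2\pmod 3$ they differ by exactly one, which is the inconclusive regime of \cite[Theorem~4.3]{depthmodulo}, and the paper resolves this by exhibiting an explicit initially regular sequence of the required length --- via \cite[Theorem~3.11]{FHM-ini} when $n=3t$, and via the separately proved Lemma~\ref{G_{3n+2,2}} when $n=3t+2$ --- to push the depth up to the larger value. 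You instead switch the colon element to $x_2$ in exactly those two cases; your computations of $(I:x_2)$ and $(I,x_2)$ are correct, the two ends come out equal ($t+1$ when $n=3t$ and $t+2$ when $n=3t+2$, using \cite[Lemma~2.8]{M-depthoftree} and additivity of depth over disjoint unions), and the depth lemma then closes the argument with no auxiliary construction; for $n\equiv 1\pmod 3$ your argument coincides with the paper's. Your version is shorter and more elementary, relying only on the path and cycle depth formulas and Proposition~\ref{G_{n,1}}, and it renders Lemma~\ref{G_{3n+2,2}} unnecessary for this proposition; what it forgoes is the explicit initially regular sequence on $G_{3t+2,2}$, which the paper evidently also wants for its own sake as an illustration of the technique that is the article's main theme.
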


\begin{proof}
    Let $I = I(G_{n,2})$ for $n\ge 3$. Consider the following short exact sequence
    $$0 \longrightarrow R_{n,2}/(I : y_2) \longrightarrow R_{n,2}/I \longrightarrow R_{n,2}/(I, y_2) \longrightarrow 0.$$
    Let $J_1 = (I : y_2)$ and $J_2 = (J,y_2)$. Notice that $J_1 = (I:y_2) = (I(C_n) , y_1)$ and hence
    \begin{align*}
        \depth(R_{n,2}/J_1) &= \depth\dfrac{k[x_1,\ldots,x_n]}{I(C_n)}[y_2]\\
        &= \depth(A_n/I(C_n)) + 1\\
        &= \lceil\dfrac{n-1}{3}\rceil + 1= \lceil\dfrac{n+2}{3}\rceil, \quad \text{by Theorem }\ref{Jacq}.
    \end{align*}
    Next, notice that $J_2 = (I,y_2) = (I(G_{n,1}),y_2)$ and thus
    \begin{align*}
        \depth(R_{n,2}/J_2) &=  \depth\dfrac{k[x_1,\ldots,x_n]}{I(G_{n,1})} [y_1]= \lceil\dfrac{n}{3}\rceil, \quad \mbox{by Proposition }~\ref{G_{n,1}}.
    \end{align*}
    If $n\equiv 1 \mod 3$, then $\lceil\dfrac{n+2}{3}\rceil=\lceil\dfrac{n}{3}\rceil=\lceil\dfrac{n-1}{3}\rceil+1$ and in that case $$\depth R_{n,2}/I=\lceil\dfrac{n-1}{3}\rceil+1,$$ by \cite[Theorem~4.3]{depthmodulo}. We now handle the remaining two cases. 
    
    If $n=3t$ with $t\in\mathbb{N}$, we have $\depth(R_{n,2}/J_1) = t+1$ and $\depth(R_{n,2}/J_2) = t$. Let $f_1 = x_1 + x_{3t} + x_2, f_i =x_{3i-2}+x_{3i-3}+x_{3i-1}$ for all $2\le i\le t$, and  $f_{t+1} = y_2+y_1$. By \cite[Theorem~3.11]{FHM-ini}, $f_1, \ldots, f_{t+1}$ is an initially regular sequence on $R_{n,2}/I$ with respect to an order such that $x_1 > x_{3t} > x_2$, $x_{3i-2}>x_{3i-3}>x_{3i-1}$ for all $2\le i \le t$ and  $y_2>y_1$. Thus, $\depth(R_{n,2}/J)\ge t+1$ and thus $$\depth(R_{n,2}/J) = \depth(R_{n,2}/J_1)=t+1=\lceil\dfrac{n+2}{3}\rceil=\lceil\dfrac{n-1}{3}\rceil+1,$$ by \cite[Theorem~4.3]{depthmodulo}.
       
    In the remaining case that $n=3t+2$ with $t\in\mathbb{N}$ we have $\depth(R_{n,2}/J_1) = t+2$ and $\depth(R_{n,2}/J_2) = t+1$. Moreover, by Lemma~\ref{G_{3n+2,2}}, we have $\depth(R_{n,2}/J)\ge t+2$. Hence $\depth(R_{n,2}/J)\ne\depth(R_{n,2}/J_2)$ and therefore,  
    $$\depth(R_{n,2}/J) = \depth(R_{n,2}/J_1) = \depth(A_n/I(C_n)) + 1 = \lceil\dfrac{n-1}{3}\rceil + 1,$$
   by \cite[Theorem~4.3]{depthmodulo}. 
\end{proof}

In the final result of this paper, we establish the depth of $R_{n,m}/I(G_{n,m})$ for any $m\ge 0$ and any $n\ge 3$.
\begin{theorem}\label{G_nm}
    For $n\ge 3$ and $m\ge 0$, we have that
    \[ \depth(\frac{R_{n,m}}{I(G_{n,m})}) = \left\{
\begin{array}{lc}
      \depth(\dfrac{A_n}{I(C_{n})}) + \dfrac{m}{3}=\lceil\dfrac{n-1}{3}\rceil + \dfrac{m}{3}, & \quad \mbox{if } m\equiv 0 \mod 3, \\
      \lceil\dfrac{n}{3}\rceil + \dfrac{m-1}{3}=\lceil\dfrac{n}{3}\rceil + \dfrac{m-1}{3}, & \quad \mbox{if } m\equiv 1\mod 3,\\
      \depth(\dfrac{A_n}{I(C_n)}) + \dfrac{m+1}{3}=\lceil\dfrac{n-1}{3}\rceil + \dfrac{m+1}{3}, & \quad \mbox{if } m\equiv 2\mod 3.
\end{array}
\right. \]  
\end{theorem}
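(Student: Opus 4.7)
The plan is to prove the theorem by induction on $m$, with the base cases $m = 0, 1, 2$ supplied by Theorem~\ref{Jacq}, Proposition~\ref{G_{n,1}}, and Proposition~\ref{m=2 case} respectively. For convenience set $d(n, k) := \depth R_{n, k}/I(G_{n,k})$. A direct inspection of the three branches of the claimed formula shows that they all satisfy the recursion $d(n, m) = d(n, m-3) + 1$, so it suffices to establish this recursion for each $m \ge 3$.

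For the inductive step I would \emph{not} use the leaf variable $y_m$ but rather the interior path vertex $y_{m-1}$, forming the short exact sequence
$$0 \longrightarrow R_{n,m}/(I : y_{m-1}) \longrightarrow R_{n,m}/I \longrightarrow R_{n,m}/(I, y_{m-1}) \longrightarrow 0,$$
where $I = I(G_{n, m})$. Since the only generators of $I$ divisible by $y_{m-1}$ are $y_{m-2}y_{m-1}$ and $y_{m-1}y_m$, and since once $y_{m-2}$ is adjoined to the ideal the generator $y_{m-3}y_{m-2}$ becomes redundant, a direct computation yields
$$(I : y_{m-1}) = (I(G_{n, m-3}), y_{m-2}, y_m) \quad \text{and} \quad (I, y_{m-1}) = (I(G_{n, m-2}), y_{m-1}),$$
under the convention $I(G_{n, 0}) = I(C_n)$ used when $m = 3$. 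Killing the free variables in each quotient (namely $y_{m-1}$ on the left and $y_m$ on the right) then gives
$$\depth R_{n,m}/(I : y_{m-1}) = d(n, m-3) + 1, \qquad \depth R_{n,m}/(I, y_{m-1}) = d(n, m-2) + 1.$$

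The main thing to verify is the inequality $d(n, m-3) \le d(n, m-2)$, because once this holds we are in the clean regime $\depth A \le \depth C$ of the depth lemma and it forces $\depth R_{n,m}/I = d(n, m-3) + 1$. By the inductive hypothesis, this reduces to a short case check on $(n \bmod 3, m \bmod 3)$: for $m \equiv 0 \pmod 3$ the difference $d(n,m-2)-d(n,m-3)$ equals $\lceil n/3\rceil - \lceil (n-1)/3\rceil \in \{0, 1\}$; for $m \equiv 1 \pmod 3$ it equals $\lceil (n-1)/3\rceil - \lceil n/3\rceil + 1 \in \{0, 1\}$; and for $m \equiv 2 \pmod 3$ the two values coincide. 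Nonnegativity holds in every case, and a final line-by-line verification shows that $d(n, m-3) + 1$ agrees with the formula's prediction for $d(n, m)$ in each residue class (since $m$ and $m-3$ have the same residue and the additive $m$-dependent term increases by exactly one).

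The only delicate point in the plan is the choice of splitting variable. The natural leaf choice $y_m$ produces, by the computations $\depth R/(I:y_m) = d(n,m-2)+1$ and $\depth R/(I,y_m) = d(n,m-1)$, the ambiguous depth-lemma regime $\depth A = \depth C + 1$ in several congruence classes; bypassing that ambiguity would require constructing new initially regular sequences in the spirit of Lemma~\ref{G_{3n+2,2}}. Splitting instead at the interior vertex $y_{m-1}$ shifts the indices in precisely the way that makes $\depth A \le \depth C$ uniformly, so the depth lemma alone closes the induction.
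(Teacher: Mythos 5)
Your proposal is correct and follows essentially the same route as the paper: the same short exact sequence at the interior vertex $y_{m-1}$, the same identifications $(I:y_{m-1})=(I(G_{n,m-3}),y_{m-2},y_m)$ and $(I,y_{m-1})=(I(G_{n,m-2}),y_{m-1})$, and the same case check on residues to land in the $\depth$-lemma regime forcing $\depth R_{n,m}/I = d(n,m-3)+1$. Your remark on why splitting at $y_m$ would fail is a nice observation but does not change the argument.
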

    \begin{proof} We proceed by induction on $m$.  When $m=0$ there is nothing to show. The case when $m=1$ is shown in Proposition~\ref{G_{n,1}}, whereas the case $m=2$ is shown in Proposition~\ref{m=2 case}.

Let  $n\ge 3, m\ge 3$ and let $I = I(G_{n,m})$. Consider the following short exact sequence
    $$0 \longrightarrow R_{n,m}/(I : y_{m-1}) \longrightarrow R_{n,m}/I \longrightarrow R_{n,m}/(I, y_{m-1}) \longrightarrow 0.$$
    Let $I_1 = (I:y_{m-1})$ and $I_2 = (I,y_{m-1})$. Notice that $I_1 = (I(G_{n,m-3}),y_{m-2},y_m)$ and thus 
      \begin{align*}
        \depth(R_{n,m}/I_1) = \depth (\dfrac{k[x_1,\ldots,y_{m-3}]}{I(G_{n,m-3})}[y_{m-1}])=\depth(R_{n,m-3}/I(G_{n,m-3})) + 1.
     \end{align*}
     On the other hand, we have $I_2 = (I(G_{n,m-2}),y_{m-1})$ and thus 
     \begin{align*}
        \depth(R_{n,m}/I_2) =\depth (\dfrac{k[x_1,\ldots,x_n,y_1,\ldots,y_{m-2}]}{I(G_{n,m-2})}[y_m])= \depth(R_{n,m-2}/I(G_{n,m-2})) + 1.
     \end{align*}
     Write $m=3h+r$, where $0\le r\le 2$ and $h\in \mathbb{N}$, and consider all three cases separately. 

    If $r=0$, then  $m=3h$ and by the inductive hypothesis
         \begin{align*}
             \depth(R_{n,m}/I_1) &= \depth(R_{n,3h-3}/I(G_{n,3h-3}) + 1\\
             &= \lceil\dfrac{n-1}{3}\rceil + \dfrac{3h-3}{3} + 1= \lceil\dfrac{n-1}{3}\rceil + h.
         \end{align*}
Also, 
\begin{align*}
             \depth(R_{n,m}/I_2) &= \depth(R_{n,3h-2}/I(G_{n,3h-2}) + 1\\
             &= \lceil\dfrac{n}{3}\rceil + \dfrac{3h-2-1}{3} +1= \lceil\dfrac{n}{3}\rceil + h.
         \end{align*}
         Therefore, $\depth(R_{n,m}/I_2)\ge\depth(R_{n,m}/I_1)$, and hence 
         $$\depth(R_{n,m}/I) =\depth(R_{n,m}/I_1) = \lceil\dfrac{n-1}{3}\rceil+ h = \lceil\dfrac{n-1}{3}\rceil + \dfrac{m}{3},$$
         by \cite[Theorem~4.3]{depthmodulo}.

          If $r=1$, then $m=3h+1$ and by the inductive hypothesis
         \begin{align*}
             \depth(R_{n,m}/I_1) &= \depth(R_{n,3h-2}/I(G_{n,3h-2})) + 1\\
             &= \lceil\dfrac{n}{3}\rceil + \dfrac{3h-2-1}{3} + 1= \lceil\dfrac{n}{3}\rceil + h,         
         \end{align*}
         and
         \begin{align*}
             \depth(R_{n,m}/I_2) &= \depth(R_{n,3h-1}/I(G_{n,3h-1})) + 1\\
             &= \lceil\dfrac{n-1}{3}\rceil + \dfrac{3h-1+1}{3} +1= \lceil\dfrac{n+2}{3}\rceil + h.
         \end{align*}
        Hence,  $\depth(R_{n,m}/I) = \depth(R_{n,m}/I_1) = \lceil\dfrac{n}{3}\rceil + h = \lceil\dfrac{n}{3}\rceil + \dfrac{m-1}{3}$, by \cite[Theorem~4.3]{depthmodulo}.

          If $r=2$, then  $m=3h+2$ and again by the inductive hypothesis we have
         \begin{align*}
             \depth(R_{n,m}/I_1) &= \depth(R_{n,3h-1}/I(G_{n,3h-1}) + 1\\
             &= \lceil\dfrac{n-1}{3}\rceil + \dfrac{3h-1+1}{3} + 1= \lceil\dfrac{n-1}{3}\rceil + h + 1,
         \end{align*}
         and
         \begin{align*}
             \depth(R_{n,m}/I_2) &= \depth(R_{n,3h}/I(G_{n,3h}) + 1\\
             &= \lceil\dfrac{n-1}{3}\rceil + \dfrac{3h}{3} +1= \lceil\dfrac{n-1}{3}\rceil + h + 1.
         \end{align*}
         Therefore, as before by \cite[Theorem~4.3]{depthmodulo} we have $$\depth(R_{n,m}/I) = \depth(R_{n,m}/I_1) = \lceil\dfrac{n-1}{3}\rceil + h + 1= \lceil\dfrac{n-1}{3}\rceil +  \dfrac{m+1}{3}.$$
    This completes the proof.
 \end{proof}

\section{Acknowledgement}
I would like to express my deepest gratitude to my advisor, Professor Louiza Fouli, for her expert guidance and constant encouragement. I also want to thank Professor T\`{a}i H\`{a} for his suggestion of the question regarding the depth of unicyclic graphs, and Professor Susan Morey for her comments about initially regular sequences on cycles in general.

\end{document}